\renewcommand*{\leq}{\leqslant} \renewcommand*{\geq}{\geqslant}
\newcommand*{\vare}{\varepsilon}
\newcommand*{\sqvare}{\sqrt{\vare}} \newcommand*{\sqsq}{\sqvare\,}
\newcommand*{\vart}{\vartheta} \newcommand*{\varp}{\varphi}
\newcommand*{\mC}{\mathbb C} \newcommand*{\mR}{\mathbb R}
\newcommand*{\mT}{\mathbb T} \newcommand*{\mZ}{\mathbb Z}
\newcommand*{\cA}{\mathcal A} \newcommand*{\cB}{\mathcal B}
\newcommand*{\bcC}{\text{\boldmath $\mathcal C$}}
\newcommand*{\cF}{\mathcal F} \newcommand*{\cK}{\mathcal K}
\newcommand*{\cL}{\mathcal L} \newcommand*{\cM}{\mathcal M}
\newcommand*{\cN}{\mathcal N} \newcommand*{\cO}{\mathcal O}
\newcommand*{\cR}{\mathcal R} \newcommand*{\cS}{\mathcal S}
\newcommand*{\cT}{\mathcal T} \newcommand*{\cY}{\mathcal Y}
\newcommand*{\cZ}{\mathcal Z}
\newcommand*{\fG}{\mathfrak G} \newcommand*{\ZfG}{\widehat{\fG}}
\newcommand*{\fH}{\mathfrak H}
\newcommand*{\fL}{\mathfrak L} \newcommand*{\ZfL}{\widehat{\fL}}
\newcommand*{\fM}{\mathfrak M} \newcommand*{\fN}{\mathfrak N}
\newcommand*{\fR}{\mathfrak R} \newcommand*{\fT}{\mathfrak T}
\newcommand*{\fZ}{\mathfrak Z}
\newcommand*{\sla}{\text{\textsl{a}}} \newcommand*{\slb}{\text{\textsl{b}}}
\newcommand*{\slu}{\text{\textsl{u}}} \newcommand*{\slv}{\text{\textsl{v}}}
\newcommand*{\slw}{\text{\textsl{w}}}
\newcommand*{\Tf}{\widetilde{f}} \newcommand*{\Tg}{\widetilde{g}}
\newcommand*{\Th}{\widetilde{h}}
\newcommand*{\bc}{\mathbf c} \newcommand*{\br}{\mathbf r}
\newcommand*{\bC}{\mathbf C} \newcommand*{\bL}{\mathbf L}
\newcommand*{\bM}{\mathbf M} \newcommand*{\bS}{\mathbf S}
\newcommand*{\bOmega}{\mathbf{\Omega}}
\newcommand*{\gl}{\mathsf{gl}} \newcommand*{\GL}{\mathsf{GL}}
\newcommand*{\const}{\mathrm{const}}
\DeclareMathOperator{\ad}{ad} \DeclareMathOperator{\codim}{codim}
\DeclareMathOperator{\diag}{diag} \DeclareMathOperator{\rank}{rank}
\DeclareMathOperator{\Ad}{Ad} \DeclareMathOperator{\Fix}{Fix}
\DeclareMathOperator{\Image}{Image} \DeclareMathOperator{\Ker}{Ker}
\newcommand*{\new}{^{\text{new}}}
\newtheorem{lem}{Lemma} \newtheorem{thm}{Theorem}
\theoremstyle{definition}
\newtheorem{defn}{Definition}
\newtheorem*{cond}{Condition~$\Omega$} \newtheorem*{nott}{Notation~$\fT$}
\begin{document}

\title[Lower dimensional tori within the reversible context~2]%
{KAM theory for lower dimensional tori \\ within the reversible context~2}

\author[M.~B.~Sevryuk]{Mikhail B. Sevryuk}

\address{Institute of Energy Problems of Chemical Physics, The Russia Academy
of Sciences, Leninski\u{\i} prospect~38, Bldg.~2, Moscow 119334, Russia}

\email{sevryuk@mccme.ru}

\dedicatory{To the memory of Vladimir Igorevich Arnold who is so unexpectedly
gone}

\thanks{This study was partially supported by a grant of the President of the
Russia Federation, project No.\ NSh-8462.2010.1.}

\keywords{KAM theory, Moser's modifying terms theorem, reversible systems,
reversible context~2, fixed point manifold, lower dimensional invariant torus}

\subjclass[2000]{70K43, 70H33}

\begin{abstract}
The reversible context~2 in KAM theory refers to the situation where
$\dim\Fix G<\frac{1}{2}\codim\cT$, here $\Fix G$ is the fixed point manifold
of the reversing involution $G$ and $\cT$ is the invariant torus one deals
with. Up to now, the persistence of invariant tori in the reversible context~2
has been only explored in the extreme particular case where $\dim\Fix G=0$
[M.~B.~Sevryuk, Regul.\ Chaotic Dyn.\ \textbf{16} (2011), no.~1--2, 24--38].
We obtain a KAM-type result for the reversible context~2 in the general
situation where the dimension of $\Fix G$ is arbitrary. As in the case where
$\dim\Fix G=0$, the main technical tool is J.~Moser's modifying terms theorem
of 1967.
\end{abstract}

\maketitle

\baselineskip=17pt

\section{Introduction}\label{intro}

\subsection{Reversible Systems}\label{revers}

Any diffeomorphism $G:\cM\to\cM$ of a manifold $\cM$ induces the inner
automorphism $\Ad_G$ of the Lie algebra of vector fields on $\cM$ according to
the formula $\Ad_GV=TG(V\circ G^{-1})$. We will always assume the phase space
$\cM$ to be finite dimensional and connected.

\begin{defn}\label{defrev}
A vector field $V$ on $\cM$ is said to be \emph{equivariant with respect to
$G$} (or \emph{$G$-equivariant}) if $\Ad_GV=V$ and is said to be \emph{weakly
reversible with respect to $G$} (or \emph{weakly $G$-reversible}) if
$\Ad_GV=-V$. If the diffeomorphism $G$ is an \emph{involution} (i.e., its
square is the identity transformation), then the word ``\emph{weakly}'' is
usually omitted.
\end{defn}

For instance, the Newtonian equations of motion
$\ddot{\br}=\cF(\br,\dot{\br})$, $\br\in\mR^{\cK}$, are reversible with
respect to the phase space involution
$G:(\br,\dot{\br})\mapsto(\br,-\dot{\br})$ if and only if the forces $\cF$ are
even in the velocities $\dot{\br}$ (e.g., are independent of $\dot{\br}$).
This is probably the best known example of a reversible vector field. The
papers \cite{LR98,RQ92} present general surveys of the theory of finite
dimensional reversible systems with extensive bibliographies. There exists
a remarkable deep similarity between reversible and Hamiltonian dynamics
\cite{A84,AS86,LR98,RQ92,S86,S91}. In particular, a great deal of the
Hamiltonian KAM theory can be carried over to the reversible realm
\cite{AS86,BCHV09,BHN07,BH95,BHS96Gro,BHS96LNM,M67,M73,S86,S95Cha,S98,S06,%
S07DCDS,S07Stek}. Of the three great founders of KAM theory (A.~N.~Kolmogorov,
V.~I.~Arnold, and J.~K.~Moser, all in blessed remembrance now), the last
two---Arnold \cite{A84,AS86} and Moser \cite{M65,M67,M73}---made fundamental
contributions not only to the Hamiltonian version of this theory but also to
its reversible counterpart. The reversible KAM theory started with Moser's
paper~\cite{M65}. Arnold~\cite{A84} suggested that KAM theory for reversible
systems can be generalized to weakly reversible systems, and this idea was
soon realized \cite{AS86,S86}. A brief survey of the reversible KAM theory as
it stood in 1997 is presented in the review~\cite{S98}. Of subsequent works,
one may mention e.g.\ the references
\cite{BCHV09,BHN07,L01,S06,S07DCDS,S07Stek,S11,WX09,WXZ10,WZX11,W01,X04,Z08}.

\subsection{Invariant Tori of Reversible Flows}\label{tori}

The key object of KAM theory is an invariant torus carrying quasi-periodic
motions. In the case of weakly reversible systems, such a torus is always
assumed to be invariant not only under the dynamical system itself but also
under the corresponding reversing diffeomorphism \cite{AS86,S86,S98,S11}. So,
consider an $n$-torus $\cT$ ($n\geq 1$) invariant under both the flow of a
weakly reversible vector field and its reversing diffeomorphism $G$. Suppose
that $\cT$ carries \emph{conditionally periodic} motions, i.e., in some
angular coordinates $\varp\in\mT^n=(\mR/2\pi\mZ)^n$ in $\cT$, the dynamics on
$\cT$ takes the form $\dot{\varp}=\omega$ with some constant $\omega\in\mR^n$.

\begin{lem}[\cite{BHS96Gro,BHS96LNM,S86}]\label{lemNF}
If the motions on $\cT$ are \emph{quasi-periodic}, i.e., the components of
$\omega$ are rationally independent, then the diffeomorphism $G|_{\cT}$ has
the form $\varp\mapsto\Delta-\varp$ with some constant $\Delta\in\mT^n$ and is
therefore an involution. The coordinate shift $x=\varp-\Delta/2$ puts this
involution into the standard form $G|_{\cT}:x\mapsto-x$ while the dynamics is
still given by the equation $\dot{x}=\omega$.
\end{lem}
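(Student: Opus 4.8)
The plan is to translate the weak reversibility of the vector field into a functional equation for the restricted diffeomorphism $g := G|_{\cT}$ and then exploit the minimality of a quasi-periodic flow. Write $\Phi^t(\varp) = \varp + \omega t$ for the flow on $\cT$ in the coordinates in which $\dot\varp = \omega$. First I would record what $\Ad_G V = -V$ means at the level of flows: since $G$ carries integral curves of $V$ to integral curves of $-V$, one has $G \circ \Phi^t = \Phi^{-t} \circ G$ on $\cT$, which in these coordinates reads
\[
g(\varp + \omega t) = g(\varp) - \omega t \qquad \text{for all } t \in \mR,\ \varp \in \mT^n .
\]

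The central idea is to introduce the smooth self-map $h$ of $\mT^n$ defined by $h(\varp) = \varp + g(\varp)$, which is well defined and continuous because addition is defined on the torus. The functional equation gives at once $h(\varp + \omega t) = (\varp + \omega t) + g(\varp) - \omega t = h(\varp)$, so $h$ is constant along every orbit of the flow $\Phi^t$. This is where quasi-periodicity enters and becomes the crux of the argument: since the components of $\omega$ are rationally independent, the linear flow on $\mT^n$ is minimal, i.e.\ every orbit is dense (the classical Weyl--Kronecker theorem). A continuous function that is constant on a dense orbit is constant everywhere, hence $h \equiv \Delta$ for some $\Delta \in \mT^n$. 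Unwinding the definition of $h$ yields $g(\varp) = \Delta - \varp$, which is the asserted form.

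The remaining conclusions are immediate verifications. The map $\varp \mapsto \Delta - \varp$ is an involution, since applying it twice returns $\Delta - (\Delta - \varp) = \varp$. Finally, under the shift $x = \varp - \Delta/2$ the equality $g(\varp) = \Delta - \varp$ becomes $x \mapsto -x$, while $\dot x = \dot\varp = \omega$ because the shift is by a constant.

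I expect the only genuinely delicate point to be the minimality step, and it is precisely there that the hypothesis of quasi-periodicity, rather than mere conditional periodicity, is indispensable: if $\omega$ had rationally dependent components, the orbit closures would be proper subtori, $h$ would be forced constant only along each of them, and $g$ could well fail to take the reflection form. One minor technical point to handle with care is that passing from the relation on orbits to the conclusion uses continuity of $h$ directly on the torus, so that no integer ambiguity from lifting $g$ to the universal cover $\mR^n$ ever arises; this is settled by working with $h$ as a $\mT^n$-valued map throughout.
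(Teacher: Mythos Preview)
Your proof is correct and follows essentially the same route as the paper: both arguments translate weak reversibility into the relation $g(\varp+\omega t)=g(\varp)-\omega t$, then use density of a single orbit together with continuity to conclude $g(\varp)=\Delta-\varp$. Your introduction of the auxiliary map $h(\varp)=\varp+g(\varp)$ is a slight repackaging of the paper's direct computation $G(-\omega t)=G(0)+\omega t$ along the orbit through $0$, but the underlying mechanism is identical.
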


\begin{proof}
The function $t\mapsto\varp(t)=\omega t$ is a solution of the equation
$\dot{\varp}=\omega$. The function $t\mapsto G\bigl(\varp(-t)\bigr)$ is also a
solution (due to weak reversibility), so that $G(-\omega t)=G(0)+\omega t$ for
any $t\in\mR$. Since the orbit $\{\omega t \mid t\in\mR\}$ is dense in
$\mT^n$, we conclude that $G(\varp)=G(0)-\varp$ for any $\varp\in\mT^n$.
\end{proof}

The incommensurability condition on $\omega_1,\ldots,\omega_n$ in
Lemma~\ref{lemNF} is essential. For instance, the zero vector field
($\omega=0$) on a torus is reversed by any diffeomorphism.

Despite its triviality, Lemma~\ref{lemNF} has three very important
consequences. First, it suggests that a passage from reversible systems to
weakly reversible ones cannot affect drastically KAM-type theorems (and this
is indeed so \cite{AS86,S86,S98}). Second, under the hypotheses of
Lemma~\ref{lemNF}, the fixed point set $\Fix(G|_{\cT})$ is a collection of
$2^n=2^{\dim\cT}$ isolated points:
\begin{equation}
\Fix(G|_{\cT}) = (\Fix G)\cap\cT = \{0;\pi\}^n =
\{x\in\mT^n \mid \text{$x_\ell=0$ or $x_\ell=\pi$}, \; 1\leq\ell\leq n\}.
\label{eqFix}
\end{equation}
Third, if the fixed point set $\Fix G$ is a smooth submanifold of the phase
space $\cM$ and all the connected components of $\Fix G$ are of the same
dimension (so that $\dim\Fix G$ is well defined), then
$\dim\Fix G\leq\codim\cT$ ($\codim\cT$ being the codimension of $\cT$ in
$\cM$). Indeed, at each point $\slw\in(\Fix G)\cap\cT$, the number $-1$ is an
eigenvalue of $T_{\slw}G$ of multiplicity at least $n=\dim\cT$.

If $G:\cM\to\cM$ is an involution, then
$\Fix G=\{\slw\in\cM \mid G(\slw)=\slw\}$ is always a smooth submanifold of
$\cM$ of the same smoothness class as $G$ itself (a particular case of the
Bochner theorem \cite{Br72,MZ74}). This submanifold can well be empty or
consist of several connected components of different dimensions. Extensive
information on the fixed point submanifolds of involutions of various
manifolds is presented in the books \cite{Br72,CF64}, see also the articles
\cite{QS93,S95RMS,S11}. In the sequel, while speaking of reversing involutions
(or, as they are sometimes called, \emph{reversing symmetries} or
\emph{reversers}) $G$, we will always suppose that $\Fix G\neq\varnothing$ and
all the connected components of $\Fix G$ are of the same dimension, so that
$\dim\Fix G$ is well defined (this is the case for almost all the reversible
systems encountered in practice).

The ubiquity of invariant tori carrying conditionally periodic motions stems,
in the long run, from the fact that any finite-dimensional connected compact
Abelian Lie group is a torus.

\subsection{Reversible Contexts~1 and~2}\label{context}

Let a $G$-reversible system admit an invariant $n$-torus $\cT$ carrying
quasi-periodic motions. As we saw above, $(\Fix G)\cap\cT$ is a collection of
$2^n$ points and $\dim\Fix G\leq\codim\cT$.

\begin{defn}[\cite{BHS96Gro,BHS96LNM,S08,S11}]\label{defcont}
The situation where $\frac{1}{2}\codim\cT\leq\dim\Fix G\leq\codim\cT$ is
called the \emph{reversible context~1}, whereas the opposite situation where
$0\leq\dim\Fix G<\frac{1}{2}\codim\cT$ is called the \emph{reversible
context~2}.
\end{defn}

An extensive discussion on the differences between reversible contexts~1 and~2
is presented in~\cite{S11} and will not be repeated here (the paper~\cite{S11}
also contains an example where the reversible context~2 appears quite
naturally). We will confine ourselves with some brief observations concerning
integrable cases of the \emph{extreme} reversible contexts~1 and~2.

Let the phase space variables be $x\in\mT^n$ and $y\in\cY\subset\mR^m$ where
$\cY$ is an open and connected domain ($n\geq 1$, $m\geq 1$). On the phase
space $\mT^n\times\cY$, consider vector fields reversible with respect to one
of the two involutions $G:(x,y)\mapsto(-x,\delta y)$, $\delta=\pm 1$ (in the
case of $\delta=-1$, the domain $\cY$ is assumed to contain the origin $0$ and
to be symmetric with respect to the origin: $-y\in\cY$ whenever $y\in\cY$).
Suppose that these vector fields are integrable [equivariant with respect to
the torus translations $(x,y)\mapsto(x+\const,y)$] and depend on a parameter
$\nu\in\cN\subset\mR^s$ where $\cN$ is an open and connected domain
($s\geq 0$). We are interested in invariant $n$-tori of such reversible
systems of the form $\cT=\{y=\const\}$.

If $\delta=1$ then $\Fix G=\{0;\pi\}^n\times\cY$ [cf.~\eqref{eqFix}], and we
have the \emph{extreme reversible context~1}: $\dim\Fix G=m=\codim\cT$. One
easily sees that integrable $G$-reversible systems in this case have the form
\begin{equation}
\dot{x}=H(y,\nu), \quad \dot{y}=0
\label{eqextreme1}
\end{equation}
with an arbitrary function $H$. For any value of $\nu$, the phase space is
foliated into $n$-tori $\{y=\const\}$ invariant under both the flow itself and
the reversing involution $G$ and carrying conditionally periodic motions with
frequency vectors $H(y,\nu)$. This picture is very similar to the dynamics of
completely integrable Hamiltonian systems (in the Liouville--Arnold
sense~\cite{AKN06}), $(y,x)$ playing the roles of action-angle variables.
Under suitable nondegeneracy conditions on the unperturbed frequency map
$(y,\nu)\mapsto H(y,\nu)$, one can prove various KAM-type theorems for
systems~\eqref{eqextreme1}
\cite{AS86,BH95,BHS96Gro,BHS96LNM,M67,M73,S86,S95Cha,S98,S06}.

If $\delta=-1$ then $\Fix G=\{0;\pi\}^n\times\{0\}$ [cf.~\eqref{eqFix}], and
we have the \emph{extreme reversible context~2}: $\dim\Fix G=0$ (in fact,
$\Fix G$ consists of $2^n$ isolated points). It is easy to verify that
integrable $G$-reversible systems in this case have the form
\begin{equation}
\dot{x}=H(y,\nu), \quad \dot{y}=P(y,\nu)
\label{eqextreme2}
\end{equation}
with arbitrary functions $H$ and $P$ \emph{even} in $y$. The only $n$-torus of
the form $\{y=\const\}$ invariant under the reversing involution $G$ is
$\{y=0\}$, and this torus is invariant under the flow of~\eqref{eqextreme2} if
and only if $P(0,\nu)=0$. Generically, the latter equation has no solutions
for $s<m$ and determines an $(s-m)$-dimensional surface in $\cN$ for
$s\geq m$.

Thus, the extreme reversible contexts~1 and~2 are drastically
different~\cite{S11}. In the extreme reversible context~2, the setup is not
local with respect to $y$ (and $y=0$ is a special value), $y$ is not an analog
of the action variable (one may call $y$ an \emph{anti-action} variable), and
even integrable systems with less than $m$ external parameters
(e.g.\ individual integrable systems) have generically no invariant tori.

By now, the reversible KAM theory in context~1 is nearly as developed as the
Hamiltonian KAM theory \cite{A84,AS86,B91,BCHV09,BHN07,BH95,BHS96Gro,%
BHS96LNM,L01,M65,M66,M67,M73,QS93,Sch87,S86,S91,S95RMS,S95Cha,S98,S06,%
S07DCDS,S07Stek,WX09,WXZ10,WZX11,W01,X04,Z08}. On the other hand, to the best
of my knowledge, the first and only result on the reversible context~2 was
published no earlier than in 2011~\cite{S11}. In the paper~\cite{S11}, we
studied the \emph{extreme} reversible context~2 ($\dim\Fix G=0$) and proved a
KAM-type theorem for small $G$-reversible perturbations of
systems~\eqref{eqextreme2} for sufficiently many (at least $n+m$) external
parameters. The main result of~\cite{S11} was obtained as an almost immediate
corollary of J.~Moser's modifying terms theory~\cite{M67} of 1967. At the end
of~\cite{S11}, we listed ten tentative topics and directions for further
research. The fourth topic was the non-extreme reversible context~2
($\dim\Fix G>0$).

The goal of the present paper is to prove a first theorem in the general
reversible context~2 for an \emph{arbitrary} dimension of $\Fix G$ (provided
that $\dim\Fix G<\frac{1}{2}\codim\cT$). As the principal technical tool, we
again use Moser's modifying terms theory~\cite{M67} which turns out to be very
powerful.

Invariant tori $\cT$ in the non-extreme reversible context~1
($\frac{1}{2}\codim\cT\leq\dim\Fix G<\codim\cT$) are often said to be lower
dimensional \cite{B91,BCHV09,BHN07,BH95,BHS96Gro,BHS96LNM,L01,Sch87,S91,%
S95Cha,S98,S06,S07DCDS,S07Stek,WX09,WXZ10,WZX11,W01,X04,Z08}. Similarly, in
the Hamiltonian KAM theory, isotropic invariant tori whose dimension is less
than the number of degrees of freedom (for general references, see
e.g.\ \cite{AKN06,BHS96LNM,BHTB90,BS10}) are also said to be lower
dimensional. Following this pattern, we call invariant tori $\cT$ in the
non-extreme reversible context~2 ($0<\dim\Fix G<\frac{1}{2}\codim\cT$)
\emph{lower dimensional} as well.

The task of developing the reversible KAM theory in context~2 was listed
(as problem~9) among the ten problems of the classical KAM theory in the
note~\cite{S08}.

\subsection{Structure of the Paper}\label{struct}

The paper is organized as follows. A precise formulation of the particular
case of Moser's theorem we need is given in Section~\ref{Moser}. In
Section~\ref{we}, we explain why Theorem~\ref{thmain} of Section~\ref{Moser}
can be applied to reversible systems. Our main result for the non-extreme
reversible context~2 and its detailed proof are presented in
Section~\ref{cont2}. For comparison, the parallel result for the non-extreme
reversible context~1 is formulated in Section~\ref{cont1}.

\subsection{Dedication}\label{Arnold}

This paper is dedicated to the memory of V.~I.~Arnold, one of the most
creative, deep, distinctive, versatile, prolific, and influential scholars in
the history of mathematics. Under his supervision and with his generous help,
I mastered KAM theory and the theory of reversible systems in \mbox{1983--87}
as a student and post-graduate student at the Moscow State University, wrote
the book~\cite{S86}, and defended my PhD thesis ``Reversible dynamical
systems'' in 1988. Many subsequent important events in my life would not have
occurred without Arnold either. His untimely death is a bereavement for the
world scientific and educational community.

\section{Moser's Theorem}\label{Moser}

\subsection{Notation:\ Part~1}\label{nota1}

In this section, we present the particular case of Moser's modifying terms
theorem~\cite{M67} suitable for our purposes. In fact, the modifying terms
(in a somewhat different form) first appeared in the review~\cite{M66}. Some
generalizations of Moser's modifying terms theorem are obtained in the works
\cite{B73,B91,CGP11,Sch87,S11,Wa10}, see also a discussion in the memoir
\cite[Part~I,~\S\,7\textbf{b}]{BHTB90}.

In the sequel, $\cO_{\cK}(\slw)$ will denote an unspecified neighborhood
of a point $\slw\in\mR^{\cK}$. We will also use the notation
$\partial_{\slw}=\partial/\partial\slw$ for any variable $\slw$ and
$|\slw|=|\slw_1|+\cdots+|\slw_{\cK}|$ for any variable or vector
$\slw\in\mR^{\cK}$. The standard inner product of real vectors will be denoted
by $\langle\cdot,\cdot\rangle$ while the Poisson bracket of vector fields, by
$[\cdot,\cdot]$. For $\slw\in\mR$, we will employ the notation
$\lfloor\slw\rfloor=\max\{\ell\in\mZ \mid \ell\leq\slw\}$. For
$\slw\in\cO_{\cK}(0)$, we will write $O(\slw)$ instead of
$O\bigl(|\slw|\bigr)$ and $O_\ell(\slw)$ instead of $O\bigl(|\slw|^\ell\bigr)$
for $\ell\geq 2$. Similarly, $O(\slw',\slw'')$ will mean
$O\bigl(|\slw'|+|\slw''|\bigr)$ and $O_\ell(\slw',\slw'')$ will mean
$O\left(\bigl(|\slw'|+|\slw''|\bigr)^\ell\right)$ for $\ell\geq 2$. Finally,
$I_{\cK}$ and $0_{\cK\times\cL}$ will denote the identity $\cK\times\cK$
matrix and the zero $\cK\times\cL$ matrix, respectively.

The phase space variables in this section and the next one are $x\in\mT^n$ and
$X\in\cO_N(0)$, whereas $\nu\in\cN\subset\mR^s$ is an external parameter and
$\vare\in\cO_1(0)$ is the perturbation parameter ($n\geq 1$, $N\geq 0$,
$s\geq 0$), $\cN$ being an open and connected domain for $s\geq 1$. Boldface
letters will denote $N\times N$ matrices and matrix-valued functions. We will
keep most of Moser's original notation~\cite{M67}.

\subsection{Vector $\omega$ and Matrices $\bOmega(\nu)$}\label{omega}

Fix a vector $\omega\in\mR^n$ and an analytic matrix-valued function
$\bOmega:\cN\to\gl(N,\mR)$ satisfying the following hypothesis.

\begin{cond}[Diagonalizability, imaginary constancy, and Diophantine property]
The matrix $\bOmega(\nu)$ is diagonalizable over $\mC$ for any $\nu\in\cN$ and
the multiplicities of its eigenvalues do not depend on $\nu$. Moreover, the
\emph{imaginary parts} of the eigenvalues of $\bOmega(\nu)$ do \emph{not}
depend on $\nu$ either. If $\beta_1,\ldots,\beta_d$
($0\leq d\leq\lfloor N/2\rfloor$) are the positive imaginary parts (counting
multiplicities) of the eigenvalues of $\bOmega(\nu)$, then there exist numbers
$\tau>n-1$ and $\gamma>0$ such that
\begin{equation}
\bigl| \langle j,\omega\rangle+\langle J,\beta\rangle \bigr| \geq
\gamma|j|^{-\tau}
\label{eqDioph}
\end{equation}
for all $j\in\mZ^n\setminus\{0\}$ and $J\in\mZ^d$, $|J|\leq 2$.
\end{cond}

The inequalities~\eqref{eqDioph} imply, in particular, that $\omega$ is
Diophantine in the usual sense. Clearly, these inequalities (and even
inequalities $\langle j,\omega\rangle+\langle J,\beta\rangle\neq 0$) cannot be
valid for all $\nu\in\cN$ in the case of a non-constant continuous function
$\beta:\cN\to\mR^d$ (provided that $n\geq 2$) because the set
$\bigl\{ \langle j,\omega\rangle \bigm| j\in\mZ^n \bigr\}$ is dense in $\mR$
for rationally independent $\omega_1,\ldots,\omega_n$.

\subsection{Sets of Vector Fields and Diffeomorphisms}\label{LG}

Now introduce the following spaces of analytic vector fields and groups of
analytic diffeomorphisms (their meaning is explained in Moser's
paper~\cite{M67} and in Subsection~\ref{setup} below):
\begin{itemize}
\item $\fL$, the Lie algebra of all the analytic vector fields on
$\mT^n\times\cO_N(0)$;
\item $\fG$, the corresponding Lie group of analytic diffeomorphisms;
\item $\fL_1\subset\fL$, the Lie subalgebra of all the vector fields of the
form
\[
a(x)\partial_x+\bigl[b(x)+\bc(x)X\bigr]\partial_X
\]
(it is very easy to verify that the space of such vector fields is indeed
closed with respect to the Poisson bracket);
\item $\fG_1\subset\fG$, the corresponding Lie subgroup of analytic
diffeomorphisms of the form
\[
(x,X)\mapsto\bigl(\cA(x), \, \cB(x)+\bcC(x)X\bigr)
\]
where $\cA$ is a diffeomorphism of $\mT^n$ and $\det\bcC(x)\neq 0$ for any
$x\in\mT^n$;
\item $\ZfL\subset\fL$, a certain Lie subalgebra of vector fields;
\item $\ZfG\subset\fG$, the corresponding Lie subgroup of diffeomorphisms;
\item $\fM\subset\fL$, a certain vector \emph{subspace} of $\fL$ (not
necessarily a Lie subalgebra).
\end{itemize}

The vector fields
\[
D_\nu=\omega\partial_x+\bOmega(\nu)X\partial_X\in\fL_1
\]
will be treated as the unperturbed ones. Condition~$\Omega$ implies that the
adjoint operators
\[
\vart_\nu=\ad D_\nu:\fL_1\to\fL_1, \quad \vart_\nu V=[D_\nu,V]
\]
are semisimple \cite[Section~2b)]{M67}: for each $\nu\in\cN$, the algebra
$\fL_1$ is decomposed as
\begin{equation}
\fL_1=\fR_\nu\oplus\cR_\nu, \qquad \fR_\nu=\Ker\vart_\nu, \quad
\cR_\nu=\Image\vart_\nu=\vart_\nu(\fL_1),
\label{eqfRcR}
\end{equation}
where the nullspace $\fR_\nu$ of $\vart_\nu$ is finite dimensional
\cite[Section~2b)]{M67}. Moreover, this decomposition depends analytically on
$\nu$, and so does the operator $\vart_\nu^{-1}:\cR_\nu\to\cR_\nu$. In
particular, $\dim\fR_\nu$ does not depend on $\nu$. One straightforwardly
verifies that
\begin{align*}
\vart_\nu\bigl( a\partial_x+{} & (b+\bc X)\partial_X \bigr) = {} \\
& a_x\omega\partial_x+\bigl[ b_x\omega-\bOmega(\nu)b +
\bigl(\bc_x\omega+\bc\bOmega(\nu)-\bOmega(\nu)\bc\bigr)X \bigr]\partial_X,
\end{align*}
where $a=a(x)$,
$a_x\omega=\omega_1\partial_{x_1}a+\cdots+\omega_n\partial_{x_n}a$, and the
same notation is used for $b$ and $\bc$. Invoking Condition~$\Omega$ again, we
conclude that \cite[Section~2b)]{M67}
\begin{equation}
\begin{aligned}
\fR_\nu=\bigl\{ a_0\partial_x+(b_0+\bc_0X)\partial_X \bigm| {} &
\text{$a_0\in\mR^n$, $b_0\in\mR^N$, and $\bc_0\in\gl(N,\mR)$ are constants} \\
& \text{such that $\bOmega(\nu)b_0=0$ and
$\bOmega(\nu)\bc_0=\bc_0\bOmega(\nu)$} \bigr\}.
\end{aligned}
\label{eqKer}
\end{equation}

The Lie algebra $\ZfL$, Lie group $\ZfG$, space $\fM$, and vector fields
$D_\nu$ are assumed to satisfy the following requirements (cf.~\cite{S11}):
\begin{enumerate}
\item\label{invariant} The space $\fM$ is invariant under the action of $\ZfG$
by inner automorphisms: $\Ad_WV\in\fM$ whenever $W\in\ZfG$ and $V\in\fM$.
\item\label{linear} The spaces $\ZfL$ and $\fM$ are closed with respect to
``linearizations'': if
\[
\slu(x,X)\partial_x+\slv(x,X)\partial_X\in\ZfL \quad (\text{or}\in\fM),
\]
then
\[
\slu(x,0)\partial_x+
\left[\slv(x,0)+\frac{\partial\slv(x,0)}{\partial X}X\right]\partial_X\in\ZfL
\quad (\text{respectively}\in\fM).
\]
\item\label{D} $D_\nu\in\fM$ for any $\nu\in\cN$.
\item\label{intersect} The decomposition $\fL_1=\fR_\nu\oplus\cR_\nu$
``persists'' under the intersection with $\fM$ for any $\nu\in\cN$, i.e.,
\[
\fL_1\cap\fM=(\fR_\nu\cap\fM)\oplus(\cR_\nu\cap\fM),
\]
and both the spaces $\fR_\nu\cap\fM$ and $\cR_\nu\cap\fM$ depend analytically
on $\nu$.
\item\label{restore} $V\in\ZfL$ whenever $\nu\in\cN$, $V\in\cR_\nu$, and
$\vart_\nu V\in(\cR_\nu\cap\fM)$.
\end{enumerate}

\subsection{Formulation of the Theorem}\label{form}

Under Condition~$\Omega$ and conditions \mbox{\ref{invariant}--\ref{restore}}
just presented, the following statement holds.

\begin{thm}[Moser {\cite[Section~5d)]{M67}}, see also a discussion
in~\cite{S11}]\label{thmain}
Consider a family of systems of differential equations
\[
\dot{x}=\omega+\vare f(x,X,\nu,\vare), \quad
\dot{X}=\bOmega(\nu)X+\vare F(x,X,\nu,\vare),
\]
where the perturbation terms $f$ and $F$ are analytic in all their arguments
and
\[
f(x,X,\nu,\vare)\partial_x+F(x,X,\nu,\vare)\partial_X \in \fM
\]
for any $\nu$ and $\vare$. Then, for $\vare$ sufficiently small, there exist
unique analytic functions
\begin{equation}
\begin{aligned}
\lambda:(\nu,\vare) &\mapsto \lambda(\nu,\vare)\in\mR^n, \\
\mu:(\nu,\vare) &\mapsto \mu(\nu,\vare)\in\mR^N, \\
\bM:(\nu,\vare) &\mapsto \bM(\nu,\vare)\in\gl(N,\mR)
\end{aligned}
\label{eqmod}
\end{equation}
such that
\begin{equation}
\lambda(\nu,\vare)\partial_x +
\bigl[\mu(\nu,\vare)+\bM(\nu,\vare)X\bigr]\partial_X \in (\fR_\nu\cap\fM)
\label{eqlmbM}
\end{equation}
for any $\nu$ and $\vare$ and the following holds. For any $\nu$ and any
sufficiently small $\vare$, there is a coordinate transformation in
$\fG_1\cap\ZfG$ of the form
\begin{equation}
x=\xi+\vare A(\xi,\nu,\vare), \quad
X=\Xi+\vare B(\xi,\nu,\vare)+\vare\bC(\xi,\nu,\vare)\Xi
\label{eqtrans}
\end{equation}
\textup{[}\,$\xi\in\mT^n$ and $\Xi\in\cO_N(0)$ being new phase space
variables\/\textup{]} that casts the \emph{modified} system
\begin{equation}
\begin{aligned}
\dot{x} &= \omega+\vare f(x,X,\nu,\vare) + \vare\lambda(\nu,\vare), \\
\dot{X} &= \bOmega(\nu)X+\vare F(x,X,\nu,\vare) +
\vare\mu(\nu,\vare)+\vare\bM(\nu,\vare)X
\end{aligned}
\label{eqnews}
\end{equation}
into a system of the form
\begin{equation}
\dot{\xi}=\omega+\vare O(\Xi), \quad
\dot{\Xi}=\bOmega(\nu)\Xi+\vare O_2(\Xi).
\label{eqclear}
\end{equation}
The coefficients $A$, $B$, and $\bC$ in~\eqref{eqtrans} are analytic in $\xi$,
$\nu$, and $\vare$.
\end{thm}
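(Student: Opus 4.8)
The plan is to carry out the rapidly convergent Newton--Moser iteration that underlies the modifying terms theorem, exploiting the semisimplicity of $\vart_\nu=\ad D_\nu$ furnished by Condition~$\Omega$. The guiding observation is that the target normal form~\eqref{eqclear} says exactly that the \emph{affine-in-$X$} part of the perturbation has been annihilated: the equation $\dot\xi=\omega+\vare O(\Xi)$ forces $f$ to vanish at $\Xi=0$, while $\dot\Xi=\bOmega(\nu)\Xi+\vare O_2(\Xi)$ forces both $F$ and $\partial F/\partial X$ to vanish there. These are precisely the components of the field lying in $\fL_1$, so the iteration will be driven by the $\fL_1$-part of the perturbation, the finite-dimensional obstruction $\fR_\nu=\Ker\vart_\nu$ being absorbed into the modifying terms $\lambda,\mu,\bM$.

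First I would describe one step of the scheme. Given a perturbation $V=f\partial_x+F\partial_X\in\fM$, I form its linearization
\[
V_{\mathrm{lin}}=f(x,0,\nu,\vare)\partial_x+\left[F(x,0,\nu,\vare)+\frac{\partial F(x,0,\nu,\vare)}{\partial X}X\right]\partial_X,
\]
which lies in $\fL_1\cap\fM$ by condition~\ref{linear}. Condition~\ref{intersect} then provides the analytic-in-$\nu$ splitting $V_{\mathrm{lin}}=V_{\fR}+V_{\cR}$ with $V_{\fR}\in\fR_\nu\cap\fM$ and $V_{\cR}\in\cR_\nu\cap\fM$. I take the modifying term to be $-V_{\fR}$ (to leading order in $\vare$); by~\eqref{eqKer} it has exactly the shape $\lambda\partial_x+[\mu+\bM X]\partial_X\in\fR_\nu\cap\fM$ required in~\eqref{eqlmbM}, and adding $\vare$ times it as in~\eqref{eqnews} cancels the resonant part. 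The nonresonant remainder $V_{\cR}$ is then removed by a coordinate change generated by $U=\vart_\nu^{-1}V_{\cR}\in\fL_1$: since $U\in\cR_\nu$ and $\vart_\nu U=V_{\cR}\in\cR_\nu\cap\fM$, condition~\ref{restore} yields $U\in\ZfL$, whence $U\in\fL_1\cap\ZfL$ and the time-one map of the flow of $\vare U$ is a diffeomorphism in $\fG_1\cap\ZfG$ of the form~\eqref{eqtrans}. Finally, condition~\ref{invariant} guarantees that conjugating $V$ by this map keeps the new perturbation inside $\fM$, so the step may be repeated.

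Next I would install the quantitative machinery. The central analytic input is a tame bound on $\vart_\nu^{-1}$ restricted to $\cR_\nu$. Diagonalizing $\bOmega(\nu)$ and passing to Fourier series in $x$ makes $\vart_\nu$ diagonal (as the explicit action in Subsection~\ref{LG} shows), and the moduli of its eigenvalues on the non-kernel modes are bounded below by the quantities $|\langle j,\omega\rangle+\langle J,\beta\rangle|$ with $j\neq 0$ and $|J|\leq 2$; these are precisely the combinations kept away from zero by the Diophantine inequalities~\eqref{eqDioph}, while the finitely many $j=0$ modes split off into $\fR_\nu\oplus\cR_\nu$ and are inverted using the semisimplicity and constant eigenvalue multiplicities of Condition~$\Omega$. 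Consequently $\vart_\nu^{-1}$ costs only a fixed loss of analyticity width governed by $\gamma$ and $\tau$. With this estimate in hand, the usual Newton iteration on a nested sequence of shrinking complex strips drives the $\fL_1$-part of the perturbation down superlinearly (of order $\vare^{2^k}$ at stage $k$), while the composed coordinate changes and the accumulated modifying terms converge. Analyticity in $\nu$ and $\vare$ is preserved throughout, since the splitting of condition~\ref{intersect}, the inverse $\vart_\nu^{-1}$, and the map compositions are all analytic in those variables.

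The main obstacle will be exactly this small-divisor analysis: making the bound on $\vart_\nu^{-1}$ uniform over $\nu\in\cN$ and tame with respect to the loss of analyticity, which is where the constancy of the imaginary parts $\beta$ and the range $|J|\leq 2$ in~\eqref{eqDioph} are indispensable. Once the estimate is secured, the convergence proof is routine. Uniqueness of $\lambda,\mu,\bM$ then follows because $\fR_\nu\cap\fM$ is a complement to $\cR_\nu\cap\fM=\Image\vart_\nu\cap\fM$, and the latter consists exactly of the directions realizable by transformations in $\fG_1\cap\ZfG$; hence the resonant component that the modifying terms are obliged to cancel, and therefore $\lambda,\mu,\bM$ themselves, are uniquely pinned down.
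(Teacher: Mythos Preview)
The paper does not prove Theorem~\ref{thmain}; it is quoted verbatim from Moser~\cite[Section~5d)]{M67} (with the minor extension that $\bOmega$ may depend on $\nu$, which Moser himself indicated is immediate) and used as a black box in the proof of Theorem~\ref{thC2}. So there is no ``paper's own proof'' to compare against.

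Your sketch is a faithful outline of Moser's original argument: linearize the perturbation into $\fL_1$, split off the $\fR_\nu$-component as a modifying term via condition~\ref{intersect}, solve the homological equation $\vart_\nu U=V_{\cR}$ and invoke condition~\ref{restore} to land in $\ZfL$, conjugate, and iterate with the Diophantine bound~\eqref{eqDioph} controlling the loss of analyticity. The roles you assign to conditions~\ref{invariant}--\ref{restore} are exactly the ones Moser intended, and your identification of the small-divisor estimate on $\vart_\nu^{-1}|_{\cR_\nu}$ as the analytic crux (with uniformity in $\nu$ resting on the constancy of the imaginary parts $\beta$) is correct. In short, your proposal is a sound summary of the proof in~\cite{M67}; the present paper simply takes that result for granted.
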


The required smallness of $\vare$ in Theorem~\ref{thmain} is uniform in $\nu$
ranging in any fixed compact subset of $\cN$. The functions~\eqref{eqmod} are
usually called Moser's \emph{modifying terms}. For any $\nu$ and $\vare$, the
\emph{modifying vector field}~\eqref{eqlmbM} lies in the \emph{finite
dimensional} space $\fR_\nu\cap\fM$.

Generally speaking, a coordinate transformation~\eqref{eqtrans} is not unique.
Suppose that a transformation
\[
\xi\new=\xi+\vare\Delta(\nu,\vare), \quad \Xi\new=\Xi+\vare\bS(\nu,\vare)\Xi
\]
(with analytic coefficients $\Delta$ and $\bS$) lies in $\ZfG$. This
additional coordinate transformation retains the form~\eqref{eqclear} of the
system~\eqref{eqnews} provided that the matrix $I_N+\vare\bS(\nu,\vare)$
commutes with $\bOmega(\nu)$ for any $\nu$ and $\vare$.

\subsection{Reducible Invariant Tori}\label{reduce}

In the lower dimensional KAM theory, the following concepts are of principal
importance.

\begin{defn}[\cite{AKN06,BHS96Gro,BHS96LNM,M66,M67,S98}]\label{defFloq}
Let an invariant $n$-torus $\cT$ of some flow on an $(n+N)$-dimensional
manifold carry conditionally periodic motions with frequency vector
$\omega\in\mR^n$. This torus is said to be \emph{reducible} (or
\emph{Floquet}) if in a neighborhood of $\cT$, there exist coordinates
$\bigl(x\in\mT^n, \, X\in\cO_N(0)\bigr)$ in which the torus $\cT$ itself is
given by the equation $\{X=0\}$ and the dynamical system takes the
\emph{Floquet form} $\dot{x}=\omega+O(X)$, $\dot{X}=\bOmega X+O_2(X)$ with an
$x$-independent matrix $\bOmega\in\gl(N,\mR)$. This matrix (not determined
uniquely) is called the \emph{Floquet matrix} of the torus $\cT$, and its
eigenvalues are called the \emph{Floquet exponents} of $\cT$.
\end{defn}

In other words, an invariant torus is reducible if the variational equation
along this torus can be reduced to a form with constant coefficients. One sees
that the modified system~\eqref{eqnews} in Theorem~\ref{thmain} possesses a
reducible invariant $n$-torus $\{\Xi=0\}$ with frequency vector $\omega$ and
Floquet matrix $\bOmega(\nu)$.

The version of Moser's modifying terms theorem we have just presented differs
from the version in our previous paper~\cite{S11} in the only point: in
Theorem~\ref{thmain} above, the matrix $\bOmega$ is allowed to depend on the
external parameter $\nu$. In fact, in Moser's original paper~\cite{M67}, all
versions of the modifying terms theorem were given without an external
parameter whatever. But Moser himself pointed out \cite[p.~170]{M67} that an
extension to differential equations depending analytically on several
parameters is obvious. While examining lower dimensional invariant tori with
real Floquet exponents in Hamiltonian systems \cite[Section~6d)]{M67}, Moser
treated positive Floquet exponents as external parameters, so that the
corresponding Floquet matrix turned out to be parameter-dependent. He wrote
\cite[p.~174]{M67}: ``We do not have to keep the $\Omega_\mu$ [the eigenvalues
of $\bOmega$] fixed but only their imaginary part. Then the real parts of
these eigenvalues can be considered as additional parameters.''

\section{Applications to Reversible Systems}\label{we}

\subsection{Preliminaries}\label{prelim}

Before proceeding to applications of Moser's Theorem~\ref{thmain}, recall some
simple general facts about the action of diffeomorphisms on vector fields.
Consider an arbitrary diffeomorphism $G:\cM\to\cM$ of a manifold $\cM$.

\begin{lem}\label{lemA}
If $\Ad_GV=\delta V$ \textup{(}$\delta=\pm 1$\textup{)}, then
$\Ad_G[V,U]=\delta[V,\Ad_GU]$ for any vector field $U$. If $\Ad_GV=\delta_1V$
and $\Ad_GU=\delta_2U$ \textup{(}$\delta_1=\pm 1$, $\delta_2=\pm 1$\textup{)},
then $\Ad_G[V,U]=\delta_1\delta_2[V,U]$.
\end{lem}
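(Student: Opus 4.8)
The plan is to reduce both assertions to the single fundamental fact that $\Ad_G$ is an automorphism of the Lie algebra of vector fields on $\cM$, namely
\[
\Ad_G[V,U]=[\Ad_GV,\Ad_GU]
\]
for arbitrary vector fields $V$ and $U$. This is the standard property of the pushforward of vector fields under a diffeomorphism; if an explicit verification is wanted, it follows directly from the defining formula $\Ad_GV=TG(V\circ G^{-1})$ by checking that this operation respects the commutator of the derivations associated with $V$ and $U$ (equivalently, the bracket is intrinsic to the flow correspondence, with which $\Ad_G$ is compatible). Since the bracket $[\cdot,\cdot]$ is $\mR$-bilinear, every subsequent step is then an elementary substitution.

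For the first assertion I would insert the hypothesis $\Ad_GV=\delta V$ into the automorphism identity and extract the scalar $\delta$ from the first slot by bilinearity:
\[
\Ad_G[V,U]=[\Ad_GV,\Ad_GU]=[\delta V,\Ad_GU]=\delta[V,\Ad_GU],
\]
which is exactly the claim. No assumption on $U$ beyond smoothness is used, and the value $\delta=\pm1$ enters only as a scalar factor.

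For the second assertion I would feed both hypotheses $\Ad_GV=\delta_1V$ and $\Ad_GU=\delta_2U$ into the same identity:
\[
\Ad_G[V,U]=[\Ad_GV,\Ad_GU]=[\delta_1V,\delta_2U]=\delta_1\delta_2[V,U].
\]
Alternatively, the second statement follows from the first by applying the already-established formula $\Ad_G[V,U]=\delta_1[V,\Ad_GU]$ and then substituting $\Ad_GU=\delta_2U$.

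I do not anticipate any genuine obstacle here: the only nontrivial ingredient is the automorphism property of $\Ad_G$, and once that is recorded both parts are immediate. The content of the lemma is simply the bookkeeping of how the signs $\delta_1,\delta_2\in\{-1,+1\}$ multiply under the bracket, which is precisely what is needed later to track the equivariance and (weak) reversibility of commutators of vector fields.
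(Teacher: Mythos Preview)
Your proof is correct and follows exactly the same approach as the paper: both rest entirely on the fact that $\Ad_G$ is an automorphism of the Lie algebra of vector fields, from which the two assertions follow by bilinearity of the bracket. The paper's proof is in fact just the single sentence recording this automorphism property, and you have merely written out the (obvious) substitutions explicitly.
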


\begin{proof}
Indeed, $\Ad_G$ is an automorphism of the Lie algebra of vector fields on
$\cM$.
\end{proof}

\begin{lem}\label{lemB}
If a diffeomorphism $W:\cM\to\cM$ commutes with $G$, then $\Ad_W$ casts
$G$-equivariant vector fields to $G$-equivariant ones and weakly
$G$-reversible vector fields to weakly $G$-reversible ones.
\end{lem}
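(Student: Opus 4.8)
The plan is to reduce both assertions to a single structural fact: $\Ad$ is a group homomorphism, so that $\Ad_W$ and $\Ad_G$ commute as soon as $W$ and $G$ do.

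First I would record the composition rule $\Ad_{PQ}=\Ad_P\circ\Ad_Q$ for arbitrary diffeomorphisms $P,Q$ of $\cM$. This follows directly from the defining formula $\Ad_GV=TG\,(V\circ G^{-1})$ together with the chain rule $T(PQ)=TP\cdot TQ$ and the identity $(PQ)^{-1}=Q^{-1}P^{-1}$: evaluating at a point $q$, both $\Ad_{PQ}V$ and $\Ad_P(\Ad_Q V)$ produce the composite tangent map $TP\cdot TQ$ applied to $V$, with all objects taken at the matching preimages $Q^{-1}P^{-1}(q)$. This is precisely the homomorphism counterpart of the automorphism property of $\Ad_G$ already invoked in the proof of Lemma~\ref{lemA}.

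Next I would feed in the hypothesis that $W$ commutes with $G$, that is, $WG=GW$. Applying the composition rule in both orders yields $\Ad_W\circ\Ad_G=\Ad_{WG}=\Ad_{GW}=\Ad_G\circ\Ad_W$, so the two operators $\Ad_W$ and $\Ad_G$ commute on the Lie algebra of vector fields on $\cM$.

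Finally, let $V$ satisfy $\Ad_GV=\delta V$ with $\delta=\pm 1$, the value $\delta=+1$ corresponding to $G$-equivariance and $\delta=-1$ to weak $G$-reversibility. Since $\Ad_W$ is linear and commutes with $\Ad_G$,
\[
\Ad_G(\Ad_WV)=\Ad_W(\Ad_GV)=\Ad_W(\delta V)=\delta\,\Ad_WV,
\]
so $\Ad_WV$ obeys the same relation with the same sign $\delta$. This establishes both claims at once. I expect no genuine obstacle here: the only point demanding a line of verification is the composition rule $\Ad_{PQ}=\Ad_P\circ\Ad_Q$, after which the conclusion is entirely formal.
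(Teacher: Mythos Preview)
Your proposal is correct and follows essentially the same route as the paper: the paper's proof states that $G\mapsto\Ad_G$ is a representation of the diffeomorphism group and then writes the single chain $\Ad_G\Ad_WV=\Ad_{GW}V=\Ad_{WG}V=\Ad_W\Ad_GV=\delta\Ad_WV$. You have merely expanded the verification of the homomorphism property $\Ad_{PQ}=\Ad_P\circ\Ad_Q$ in more detail, but the structure of the argument is identical.
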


\begin{proof}
The correspondence $G\mapsto\Ad_G$ is a representation of the group of
diffeomorphisms of $\cM$. Hence, if $\Ad_GV=\delta V$ ($\delta=\pm 1$), then
$\Ad_G\Ad_WV=\Ad_{GW}V=\Ad_{WG}V=\Ad_W\Ad_GV=\delta\Ad_WV$.
\end{proof}

Let $\fH$ be a vector subspace of vector fields on $\cM$. If $\fH$ is
invariant under $\Ad_G$, we will write
\[
\fH^{+G}=\{V\in\fH \mid \Ad_GV=V\}, \quad \fH^{-G}=\{V\in\fH \mid \Ad_GV=-V\}.
\]

\begin{lem}\label{lemC}
If $G$ is an involution, then $\fH=\fH^{+G}\oplus\fH^{-G}$.
\end{lem}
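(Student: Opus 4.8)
The plan is to exploit the fact that, since $G$ is an involution, the operator $\Ad_G$ restricted to $\fH$ is itself a linear involution, and then invoke the standard eigenspace decomposition for such operators in characteristic zero. Concretely, because $G^2$ is the identity transformation and the correspondence $G\mapsto\Ad_G$ is a representation of the diffeomorphism group (as already used in the proof of Lemma~\ref{lemB}), I would first record that
\[
\Ad_G\circ\Ad_G=\Ad_{G^2}=\Ad_{\mathrm{id}}=\mathrm{id}.
\]
Since $\fH$ is assumed invariant under $\Ad_G$, the restriction $\Ad_G|_{\fH}$ is a well-defined linear map of $\fH$ to itself whose square is the identity.

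Next I would exhibit the decomposition explicitly rather than appeal to an abstract theorem, which keeps the argument self-contained. For an arbitrary $V\in\fH$ I would write
\[
V=V^{+}+V^{-}, \qquad
V^{+}=\tfrac{1}{2}\bigl(V+\Ad_GV\bigr), \quad
V^{-}=\tfrac{1}{2}\bigl(V-\Ad_GV\bigr).
\]
Both summands lie in $\fH$ by invariance, and applying $\Ad_G$ together with the involutivity $\Ad_G\circ\Ad_G=\mathrm{id}$ gives $\Ad_GV^{+}=V^{+}$ and $\Ad_GV^{-}=-V^{-}$, so $V^{+}\in\fH^{+G}$ and $V^{-}\in\fH^{-G}$. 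This shows $\fH=\fH^{+G}+\fH^{-G}$.

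To upgrade the sum to a direct sum, I would check that $\fH^{+G}\cap\fH^{-G}=\{0\}$: if $V$ lies in both subspaces, then $V=\Ad_GV=-V$, whence $2V=0$ and $V=0$. Together with the spanning statement this yields $\fH=\fH^{+G}\oplus\fH^{-G}$, completing the proof. I do not anticipate any genuine obstacle here; the only point requiring care is the tacit use of the hypothesis that $\fH$ is invariant under $\Ad_G$ (so that $V^{+}$ and $V^{-}$ actually belong to $\fH$ and the eigenspace language makes sense), and the implicit reliance on working over $\mR$ so that division by $2$ is permitted.
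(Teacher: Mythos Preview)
Your proof is correct and follows essentially the same route as the paper: the paper writes $V=\tfrac{1}{2}(V+\Ad_GV)+\tfrac{1}{2}(V-\Ad_GV)$ and notes that $G^2=\mathrm{id}$ forces the two summands into $\fH^{+G}$ and $\fH^{-G}$ respectively. Your version is in fact slightly more explicit, since you spell out why $\Ad_G$ squares to the identity and verify that the intersection $\fH^{+G}\cap\fH^{-G}$ is trivial, whereas the paper leaves these points implicit.
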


\begin{proof}
For any $V\in\fH$, we have the decomposition
$V=\frac{1}{2}(V+\Ad_GV)+\frac{1}{2}(V-\Ad_GV)$. Since $G^2$ is the identity
transformation, $\frac{1}{2}(V+\Ad_GV)\in\fH^{+G}$ and
$\frac{1}{2}(V-\Ad_GV)\in\fH^{-G}$.
\end{proof}

The involutivity condition on $G$ in Lemma~\ref{lemC} is essential. For
instance, let $\cM=\mR$ and let $G$ be linear: $G(\slw)=\sla\slw$,
$\sla\neq 0$. Set $\fH$ to be the space of all smooth vector fields on $\mR$.
If $|\sla|\neq 1$ (so that $G$ is not an involution), then $\fH^{+G}$ is the
subspace of \emph{linear} vector fields $\slb\slw\partial_{\slw}$ and
$\fH^{-G}=\{0\}$. Thus, $\fH^{+G}\oplus\fH^{-G}$ is far from coinciding with
$\fH$.

\subsection{The Main Setup}\label{setup}

In the notation of Section~\ref{Moser}, consider an involution
$G:(x,X)\mapsto(-x,\bL X)$, where $\bL\in\GL(N,\mR)$ is an involutive matrix
($\bL^2=I_N$) and the neighborhood $\cO_N(0)\ni X$ is invariant under the
linear involution $X\mapsto\bL X$. The action of $\Ad_G$ on vector fields in
$\fL$ and $\fL_1$ is described by the formulas
\begin{gather}
\Ad_G\bigl( \slu(x,X)\partial_x+\slv(x,X)\partial_X \bigr) =
-\slu(-x,\bL X)\partial_x+\bL\slv(-x,\bL X)\partial_X,
\label{eqbig} \\
\Ad_G\bigl( a(x)\partial_x+\bigl[b(x)+\bc(x)X\bigr]\partial_X \bigr) =
-a(-x)\partial_x+\bigl[\bL b(-x)+\bL\bc(-x)\bL X\bigr]\partial_X.
\label{eqsmall}
\end{gather}
Consequently, the subalgebra $\fL_1$ is invariant under $\Ad_G$. According to
Lemma~\ref{lemC}, $\fL_1=\fL_1^{+G}\oplus\fL_1^{-G}$.

Given a vector $\omega\in\mR^n$ and an analytic matrix-valued function
$\bOmega:\cN\to\gl(N,\mR)$ satisfying Condition~$\Omega$ of
Subsection~\ref{omega} and such that $\bOmega(\nu)\bL\equiv-\bL\bOmega(\nu)$,
we will apply Theorem~\ref{thmain} to the situations where
\begin{itemize}
\item $\ZfL=\fL^{+G}\subset\fL$ is the Lie algebra of $G$-equivariant vector
fields;
\item $\ZfG\subset\fG$ is the corresponding Lie group of diffeomorphisms
commuting with $G$;
\item $\fM=\fL^{-G}\subset\fL$ is the space of vector fields reversible with
respect to $G$.
\end{itemize}
Note that $\fM$ here is \emph{not} a Lie algebra: according to
Lemma~\ref{lemA}, the Poisson bracket of two $G$-reversible vector fields
is a $G$-equivariant vector field rather than a $G$-reversible one.

The following result seems to be new. In the papers \cite{M67,S11}, similar
statements were verified for $\bL=\pm I_N$ only. The paper~\cite{Sch87}
considered only the case where the dimension of the $(-1)$-eigenspace of $\bL$
does not exceed $\lfloor N/2\rfloor$ which essentially corresponds to the
reversible context~1.

\begin{lem}\label{lemcent}
For \emph{any} involutive matrix $\bL$ and the choice of $\omega$,
$\bOmega(\nu)$, $\ZfL$, $\ZfG$, and $\fM$ just described, all the conditions
\textup{\mbox{\ref{invariant}--\ref{restore}}} of Subsection~\ref{LG} are
satisfied.
\end{lem}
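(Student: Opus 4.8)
The plan is to check the five requirements \ref{invariant}--\ref{restore} one at a time, deriving everything from a single structural observation. First I would dispose of the easy items. Condition~\ref{D} is a direct computation: by~\eqref{eqsmall} one has $\Ad_G D_\nu=-\omega\partial_x+\bL\bOmega(\nu)\bL X\partial_X$, and since $\bL^2=I_N$ the hypothesis $\bOmega(\nu)\bL=-\bL\bOmega(\nu)$ is equivalent to $\bL\bOmega(\nu)\bL=-\bOmega(\nu)$, whence $\Ad_G D_\nu=-D_\nu$, i.e.\ $D_\nu\in\fL^{-G}=\fM$. Condition~\ref{invariant} is immediate from Lemma~\ref{lemB}: every $W\in\ZfG$ commutes with $G$, so $\Ad_W$ preserves weak $G$-reversibility and hence maps $\fM=\fL^{-G}$ into itself. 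For Condition~\ref{linear} I would note that, because $G$ is linear in $X$ (through $\bL$), formula~\eqref{eqbig} shows that $\Ad_G$ preserves the grading of vector fields by their degree of homogeneity in $X$; the linearization operator is merely the projection onto the part of $X$-degree $0$ in the $\partial_x$ coefficient and $X$-degree $\le 1$ in the $\partial_X$ coefficient, so it commutes with $\Ad_G$ and therefore carries each eigenspace $\fL^{+G}$, $\fL^{-G}$ into itself.

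The key structural fact, on which the remaining two conditions rest, is that $\vart_\nu$ anticommutes with $\Ad_G$ on $\fL_1$. Indeed, once $D_\nu\in\fM=\fL^{-G}$ is known, Lemma~\ref{lemA} (applied with the reversible field $D_\nu$, so $\delta=-1$) gives $\Ad_G\vart_\nu U=-\vart_\nu\Ad_G U$ for every $U\in\fL_1$. In other words $\vart_\nu$ is odd with respect to the $\mZ_2$-grading of $\fL_1$ induced by the involution $\Ad_G$. From this I would read off at once that both $\fR_\nu=\Ker\vart_\nu$ and $\cR_\nu=\Image\vart_\nu$ are $\Ad_G$-invariant: if $\vart_\nu U=0$ then $\vart_\nu\Ad_G U=-\Ad_G\vart_\nu U=0$, while $\Ad_G\vart_\nu U=-\vart_\nu\Ad_G U$ again lies in $\cR_\nu$.

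Now Condition~\ref{intersect} follows by splitting the direct sum $\fL_1=\fR_\nu\oplus\cR_\nu$ along the eigenspaces of the involution. Applying the $\nu$-independent projection $V\mapsto\frac{1}{2}(V-\Ad_G V)$ onto $\fL_1^{-G}$ to this $\Ad_G$-invariant decomposition yields $\fL_1^{-G}=\fR_\nu^{-G}\oplus\cR_\nu^{-G}$; and since $\fR_\nu,\cR_\nu\subset\fL_1$ and $\fL_1\cap\fM=\fL_1^{-G}$, these two summands are exactly $\fR_\nu\cap\fM$ and $\cR_\nu\cap\fM$. Analyticity in $\nu$ is inherited because this projection does not depend on $\nu$ while $\fR_\nu$ and $\cR_\nu$ do so analytically. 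For Condition~\ref{restore}, suppose $V\in\cR_\nu$ and $\vart_\nu V\in\cR_\nu\cap\fM=\cR_\nu^{-G}$, i.e.\ $\Ad_G\vart_\nu V=-\vart_\nu V$. The anticommutation rewrites the left-hand side as $-\vart_\nu\Ad_G V$, so $\vart_\nu(\Ad_G V-V)=0$ and hence $\Ad_G V-V\in\fR_\nu$; but $\Ad_G V-V\in\cR_\nu$ as well by invariance of $\cR_\nu$, whence $\Ad_G V-V\in\fR_\nu\cap\cR_\nu=\{0\}$ and $V\in\fL^{+G}=\ZfL$.

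The whole argument has essentially one nontrivial ingredient---the anticommutation $\Ad_G\vart_\nu=-\vart_\nu\Ad_G$---and I expect the only real care to be needed in Condition~\ref{intersect}: one must correctly match $\fR_\nu\cap\fM$ with $\fR_\nu^{-G}$ and $\cR_\nu\cap\fM$ with $\cR_\nu^{-G}$ (which is where the inclusions $\fR_\nu,\cR_\nu\subset\fL_1$ enter) and confirm that the eigenspace splitting remains analytic in $\nu$. It is worth stressing, finally, that nothing in this scheme constrains the multiplicity of the eigenvalue $-1$ of $\bL$; this dimension independence is precisely what allows the same verification to serve the reversible context~2.
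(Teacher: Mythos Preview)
Your proposal is correct and follows essentially the same approach as the paper: the key observation is the anticommutation $\Ad_G\vart_\nu=-\vart_\nu\Ad_G$ (from Lemma~\ref{lemA} with $D_\nu\in\fL^{-G}$), from which the $\Ad_G$-invariance of $\fR_\nu$ and $\cR_\nu$ and hence conditions~\ref{intersect} and~\ref{restore} follow. Your treatment of~\ref{restore} via $\Ad_GV-V\in\fR_\nu\cap\cR_\nu=\{0\}$ is a mild rephrasing of the paper's argument that $\vart_\nu$ swaps $\cR_\nu^{+G}$ and $\cR_\nu^{-G}$, but the substance is identical.
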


\begin{proof}
Property~\ref{invariant} follows immediately from Lemma~\ref{lemB}.
Property~\ref{linear} can be verified by a very easy computation
using~\eqref{eqbig}. Property~\ref{D} is obvious because the matrices
$\bOmega(\nu)$ and $\bL$ anti-commute for any $\nu\in\cN$, and therefore
$D_\nu\in\fL_1^{-G}$.

Since $\Ad_GD_\nu=-D_\nu$, Lemma~\ref{lemA} implies that
$\vart_\nu(\Ad_GV)=-\Ad_G(\vart_\nu V)$ for any $\nu\in\cN$ and $V\in\fL$.
Consequently, both the spaces $\fR_\nu$ and $\cR_\nu$ in the
decomposition~\eqref{eqfRcR} are invariant under $\Ad_G$ for any $\nu\in\cN$.
According to Lemma~\ref{lemC}, $\fR_\nu=\fR_\nu^{+G}\oplus\fR_\nu^{-G}$ with
$\fR_\nu^{+G}=\fR_\nu\cap\ZfL$, $\fR_\nu^{-G}=\fR_\nu\cap\fM$ and
$\cR_\nu=\cR_\nu^{+G}\oplus\cR_\nu^{-G}$ with $\cR_\nu^{+G}=\cR_\nu\cap\ZfL$,
$\cR_\nu^{-G}=\cR_\nu\cap\fM$. Thus,
\[
\fL_1=\fR_\nu\oplus\cR_\nu =
\fR_\nu^{+G}\oplus\fR_\nu^{-G} \oplus \cR_\nu^{+G}\oplus\cR_\nu^{-G},
\]
and $\fL_1\cap\ZfL=\fL_1^{+G}=\fR_\nu^{+G}\oplus\cR_\nu^{+G}$,
$\fL_1\cap\fM=\fL_1^{-G}=\fR_\nu^{-G}\oplus\cR_\nu^{-G}$. Each of the four
subspaces $\fR_\nu^{+G}$, $\fR_\nu^{-G}$, $\cR_\nu^{+G}$, and $\cR_\nu^{-G}$
depends analytically on $\nu$ because the spaces $\fR_\nu$ and $\cR_\nu$
depend analytically on $\nu$ and are invariant under $\Ad_G$. We have verified
property~\ref{intersect}.

Finally, $\vart_\nu(\cR_\nu^{+G})=\cR_\nu^{-G}$ and
$\vart_\nu(\cR_\nu^{-G})=\cR_\nu^{+G}$ for any $\nu\in\cN$ according to
Lemma~\ref{lemA}. Consequently, if $V\in\cR_\nu$ for some $\nu$ and
$\vart_\nu V\in\cR_\nu^{-G}$, then $V\in\cR_\nu^{+G}$ (similarly, if
$\vart_\nu V\in\cR_\nu^{+G}$, then $V\in\cR_\nu^{-G}$). Thus,
property~\ref{restore} is also valid.
\end{proof}

Taking~\eqref{eqKer} and~\eqref{eqsmall} into account, we conclude that for
$\fM=\fL^{-G}$, the spaces $\fR_\nu\cap\fM=\fR_\nu^{-G}$ where Moser's
modifying vector fields~\eqref{eqlmbM} lie are
\begin{equation}
\begin{aligned}
\fR_\nu^{-G}=\bigl\{ & a_0\partial_x+(b_0+\bc_0X)\partial_X \bigm| {}
\text{$a_0\in\mR^n$, $b_0\in\mR^N$, and $\bc_0\in\gl(N,\mR)$ are constants} \\
& \text{such that $\bOmega(\nu)b_0=0$, $\bL b_0=-b_0$,
$\bOmega(\nu)\bc_0=\bc_0\bOmega(\nu)$, and $\bc_0\bL=-\bL\bc_0$} \bigr\}.
\end{aligned}
\label{eqkernel}
\end{equation}

\subsection{Information from Linear Algebra}\label{linalg}

In the rest of this paper, we will explore the case where
\begin{itemize}
\item $N=m+2p$ ($m\geq 0$, $p\geq 1$),
\item the matrix $\bL$ is block diagonal with blocks $\delta I_m$ and $K$,
where $\delta=\pm 1$ and $K\in\GL(2p,\mR)$ is an involutive matrix
($K^2=I_{2p}$) with eigenvalues $1$ and $-1$ of multiplicity $p$ each,
\item the matrices $\bOmega(\nu)$ are block diagonal for any $\nu\in\cN$ with
blocks $0_{m\times m}$ and $\Lambda(\nu)$, where the spectrum of the matrix
$\Lambda(\nu)\in\gl(2p,\mR)$ is simple for any $\nu\in\cN$ and
$\Lambda(\nu)K\equiv-K\Lambda(\nu)$.
\end{itemize}

The equality $\Lambda(\nu)K=-K\Lambda(\nu)$ implies that the eigenvalues of
$\Lambda(\nu)$ come in pairs $(\slw,-\slw)$, $\slw\in\mC$
\cite{H96,S86,S92,Sh93}. Since the spectrum of $\Lambda(\nu)$ is simple, it
does not contain $0$ and consists of real pairs $(\alpha,-\alpha)$, purely
imaginary pairs $(i\beta,-i\beta)$, and complex quadruplets
$\pm\alpha\pm i\beta$. In the sequel, we will use the following notation.

\begin{nott}
Let $\alpha_1,\ldots,\alpha_{d_1+d_3}$ and $\beta_1,\ldots,\beta_{d_2+d_3}$ be
arbitrary real numbers ($d_1$, $d_2$, and $d_3$ being non-negative integers
such that $d_1+d_2+2d_3=p$). Then
$\fT(d_1,d_2,d_3;\alpha,\beta)\in\gl(2p,\mR)$ denotes the block diagonal
matrix with the $d_1+d_2+d_3$ blocks
\begin{gather*}
\begin{pmatrix} 0 & \alpha_k \\ \alpha_k & 0 \end{pmatrix}, \quad
1\leq k\leq d_1, \qquad
\begin{pmatrix} 0 & \beta_l \\ -\beta_l & 0 \end{pmatrix}, \quad
1\leq l\leq d_2, \\
\begin{pmatrix}
0 & \alpha_{d_1+\iota} & 0 & \beta_{d_2+\iota} \\
\alpha_{d_1+\iota} & 0 & \beta_{d_2+\iota} & 0 \\
0 & -\beta_{d_2+\iota} & 0 & \alpha_{d_1+\iota} \\
-\beta_{d_2+\iota} & 0 & \alpha_{d_1+\iota} & 0
\end{pmatrix}, \quad 1\leq\iota\leq d_3.
\end{gather*}
\end{nott}

It is easy to see that the matrix $\fT(d_1,d_2,d_3;\alpha,\beta)$ is
diagonalizable over $\mC$ for any $\alpha\in\mR^{d_1+d_3}$ and
$\beta\in\mR^{d_2+d_3}$, and its eigenvalues are
\begin{equation}
\begin{gathered}
\pm\alpha_1,\ldots,\pm\alpha_{d_1}, \qquad
\pm i\beta_1,\ldots,\pm i\beta_{d_2}, \\
\pm\alpha_{d_1+1}\pm i\beta_{d_2+1},\ldots,
\pm\alpha_{d_1+d_3}\pm i\beta_{d_2+d_3}.
\end{gathered}
\label{eqspec}
\end{equation}

The theory of normal forms and versal unfoldings of matrices anti-commuting
with a fixed involutive matrix (of \emph{infinitesimally reversible} matrices
in the terminology of \cite{S86,S92}) has been developed in e.g.\ the papers
\cite{H96,S92,Sh93}. One of the simplest results of this theory is the
following statement.

\begin{lem}[\cite{H96,S92,Sh93}]\label{lemKLam}
Let linear operators $K$ and $\Lambda$ in $\gl(2p,\mR)$ satisfy the conditions
$K^2=I_{2p}$ and $\Lambda K=-K\Lambda$. Suppose that the spectrum of $\Lambda$
is simple and has the form~\eqref{eqspec} with $\alpha_k>0$
\textup{(}$1\leq k\leq d_1+d_3$\textup{)} and $\beta_l>0$
\textup{(}$1\leq l\leq d_2+d_3$\textup{)}. Then each of the two eigenvalues
$1$ and $-1$ of $K$ is of multiplicity $p$, and in a suitable basis of
$\mR^{2p}$, the matrices of $K$ and $\Lambda$ have the form
$K=\diag(1,-1,1,-1,\ldots,1,-1)$ and $\Lambda=\fT(d_1,d_2,d_3;\alpha,\beta)$.
Moreover, this basis can be chosen to depend analytically on $K$ and
$\Lambda$.
\end{lem}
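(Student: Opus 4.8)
The plan is to work orbit by orbit in the spectrum of $\Lambda$, building an analytic real frame in which both $K$ and $\Lambda$ acquire the stated block forms. First I would record the two structural mechanisms. Since the spectrum of $\Lambda$ contains no $0$, the operator $\Lambda$ is invertible and the relation $\Lambda K=-K\Lambda$ rewrites as $\Lambda K\Lambda^{-1}=-K$; thus $K$ and $-K$ are similar, so the eigenvalues $1$ and $-1$ of the involution $K$ occur with equal multiplicities, which must therefore both equal $p$. The same relation shows that $v\mapsto Kv$ carries the $\lambda$-eigenspace of $\Lambda$ onto its $(-\lambda)$-eigenspace, while complex conjugation (both $K$ and $\Lambda$ being real) carries it onto the $\bar\lambda$-eigenspace. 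Because the spectrum is simple, the eigenvalues therefore organize into exactly the three orbit types of \eqref{eqspec}: real pairs $\{\alpha,-\alpha\}$, imaginary pairs $\{i\beta,-i\beta\}$, and quadruplets $\{\pm\alpha\pm i\beta\}$. For each orbit the sum of the corresponding eigenprojections is a real operator commuting with $\Lambda$ and intertwined with $K$, so its range is a real subspace $V$ (of dimension $2$, $2$, or $4$) invariant under both $\Lambda$ and $K$; I would decompose $\mR^{2p}$ as the direct sum of these $V$.

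Next I would exhibit the required basis on each block. For a real pair, pick a real eigenvector $v$ with $\Lambda v=\alpha v$ and set $e_1=v+Kv$, $e_2=v-Kv$; then $Ke_1=e_1$, $Ke_2=-e_2$, and $\Lambda e_1=\alpha e_2$, $\Lambda e_2=\alpha e_1$, which is the first block of $\fT$. For an imaginary pair, note that $K|_V\neq\pm I_2$ (otherwise $\Lambda|_V=-\Lambda|_V$, impossible since $\pm i\beta\neq0$), so $\Fix(K|_V)$ is a line; choosing $e_1$ in it and $e_2=-\Lambda e_1/\beta$, and using $\Lambda^2|_V=-\beta^2I_2$, one gets $Ke_1=e_1$, $Ke_2=-e_2$, $\Lambda e_1=-\beta e_2$, $\Lambda e_2=\beta e_1$, the second block of $\fT$. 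For a quadruplet, take a complex eigenvector $w$ with $\Lambda w=(\alpha+i\beta)w$ and put $f_\pm=\tfrac12(w\pm Kw)$, so that $Kf_\pm=\pm f_\pm$ and a one-line computation gives $\Lambda f_+=(\alpha+i\beta)f_-$ and $\Lambda f_-=(\alpha+i\beta)f_+$; splitting into real and imaginary parts, the vectors $e_1=\Re f_+$, $e_2=\Re f_-$, $e_3=\Im f_+$, $e_4=\Im f_-$ satisfy exactly the $4\times4$ block of $\fT$ together with $K=\diag(1,-1,1,-1)$. Concatenating these bases in the order (real pairs, imaginary pairs, quadruplets) yields $K=\diag(1,-1,\dots,1,-1)$ and $\Lambda=\fT(d_1,d_2,d_3;\alpha,\beta)$.

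Finally I would address the analytic dependence, which I expect to be the only genuinely delicate point. Because the spectrum stays simple near the given pair, the eigenvalues $\alpha_k,\beta_l$ and the orbit eigenprojections depend analytically on $(K,\Lambda)$ (via the resolvent, $P=\frac{1}{2\pi i}\oint(\zeta I-\Lambda)^{-1}\,d\zeta$ over a small contour enclosing the orbit), and the orbit type and the signs $\alpha_k>0$, $\beta_l>0$ persist. A locally analytic, nonvanishing choice of the generating vector in each block is obtained by applying the relevant projection to a fixed reference vector. The only remaining freedom is to rescale this generating vector by a nonzero complex factor; but multiplying $v$, $e_1$, or $w$ (hence $f_\pm$) by a common scalar either rescales all basis vectors of the block by one factor, which leaves the matrix of a linear map unchanged, or multiplies $f_+$ and $f_-$ by a common phase, which preserves the relations $\Lambda f_\pm=(\alpha+i\beta)f_\mp$ and hence the block. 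Thus the normalization ambiguity does not obstruct the normal form, the assembled change-of-basis matrix $T$ is analytic in $(K,\Lambda)$, and since $T$ is invertible at the base point it remains invertible nearby. The crux of the argument is precisely this compatibility between analytic perturbation of simple eigenspaces and the scaling invariance of the individual blocks.
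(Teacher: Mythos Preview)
The paper does not actually prove this lemma; it is quoted from the references \cite{H96,S92,Sh93}, where the normal-form theory of infinitesimally reversible operators is developed, so there is no in-paper proof to compare against.

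Your argument is correct and is essentially the standard one from those references: decompose $\mR^{2p}$ into the real $K$- and $\Lambda$-invariant subspaces associated with the orbits $\{\pm\alpha\}$, $\{\pm i\beta\}$, $\{\pm\alpha\pm i\beta\}$, and exhibit the required basis on each block. The three explicit constructions check out (in particular, for the quadruplet, the real vectors $e_1,\dots,e_4$ are linearly independent because $w,Kw,\bar w,K\bar w$ lie in distinct one-dimensional eigenspaces). The equality of the multiplicities of $\pm1$ for $K$ via $\Lambda K\Lambda^{-1}=-K$ is the clean way to get that claim. For the analytic dependence, your use of resolvent projections applied to a fixed reference vector is exactly right; one small remark is that once you have made such a specific analytic choice of generating vector, the block form follows directly from your formulas, so the closing discussion of scaling invariance---while true (the scalar ambiguity lies in the centralizer of both $K$ and the $\fT$-block, as the paper in effect computes in the proof of Lemma~\ref{lemfR})---is not strictly needed for the analyticity claim.
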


Returning to the matrices
\[
\bL=\begin{pmatrix}
\delta I_m & 0_{m\times 2p} \\ 0_{2p\times m} & K
\end{pmatrix}, \qquad
\bOmega(\nu)=\begin{pmatrix}
0_{m\times m} & 0_{m\times 2p} \\ 0_{2p\times m} & \Lambda(\nu)
\end{pmatrix},
\]
one concludes from Lemma~\ref{lemKLam} that without loss of generality, we can
set
\[
K=\diag(1,-1,\ldots,1,-1), \qquad
\Lambda(\nu)=\fT\bigl(d_1,d_2,d_3;\alpha(\nu),\beta\bigr),
\]
where $d_1$, $d_2$, $d_3$ are non-negative integers independent of $\nu$ (and
such that $d_1+d_2+2d_3=p$), $\alpha_k(\nu)>0$ ($\nu\in\cN$,
$1\leq k\leq d_1+d_3$), and $\beta_l>0$ ($1\leq l\leq d_2+d_3$). Recall that
the imaginary parts of the eigenvalues of $\bOmega(\nu)$ do not depend on
$\nu$ (see Subsection~\ref{omega}).

\begin{lem}\label{lemfR}
For this choice of $\bL$ and $\bOmega(\nu)$, under the condition that the
spectrum of $\Lambda(\nu)$ is simple for any $\nu\in\cN$, the space
$\fR_\nu\cap\fM=\fR_\nu^{-G}\subset\fL_1$ for any $\nu\in\cN$ consists of
vector fields $a_0\partial_x+(b_0+\bc_0X)\partial_X$ where
\begin{itemize}
\item $a_0\in\mR^n$ are arbitrary,
\item $b_0=0$ for $\delta=1$ and
\[
b_0=\bigl(\, b_{01},\ldots,b_{0m},\underbrace{0,\ldots,0}_{2p} \,\bigr)
\]
with arbitrary real $b_{01},\ldots,b_{0m}$ for $\delta=-1$,
\item
\[
\bc_0=\begin{pmatrix}
0_{m\times m} & 0_{m\times 2p} \\
0_{2p\times m} & \fT(d_1,d_2,d_3;q,r)
\end{pmatrix}
\]
with arbitrary $q\in\mR^{d_1+d_3}$ and $r\in\mR^{d_2+d_3}$.
\end{itemize}
\end{lem}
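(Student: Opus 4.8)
The plan is to start from the explicit description~\eqref{eqkernel} of $\fR_\nu\cap\fM=\fR_\nu^{-G}$ and, for the present block-diagonal $\bL$ and $\bOmega(\nu)$, to solve the four linear conditions it imposes on the constant data $a_0$, $b_0$, $\bc_0$. The vector $a_0\in\mR^n$ carries no condition and stays arbitrary. For $b_0$ I would write $b_0=(b_0',b_0'')$ with $b_0'\in\mR^m$, $b_0''\in\mR^{2p}$; the condition $\bOmega(\nu)b_0=0$ becomes $\Lambda(\nu)b_0''=0$, and since the simple spectrum of $\Lambda(\nu)$ comes in pairs $(\slw,-\slw)$ it cannot contain $0$, so $\Lambda(\nu)$ is invertible and $b_0''=0$. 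The surviving condition $\bL b_0=-b_0$ then reads $\delta b_0'=-b_0'$, forcing $b_0'=0$ when $\delta=1$ and leaving $b_0'$ free when $\delta=-1$, which is exactly the stated dichotomy.

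The core of the argument concerns $\bc_0$, which must commute with $\bOmega(\nu)$ and anti-commute with $\bL$. Writing $\bc_0$ in $(m,2p)$ block form and using once more the invertibility of $\Lambda(\nu)$, the relation $\bOmega(\nu)\bc_0=\bc_0\bOmega(\nu)$ annihilates both off-diagonal blocks, after which anti-commutation with $\bL$ kills the upper-left block; what remains is a single block $S\in\gl(2p,\mR)$ with $S\Lambda(\nu)=\Lambda(\nu)S$ and $SK=-KS$. I would then pass to a block-by-block analysis: since the spectrum of $\Lambda(\nu)=\fT(d_1,d_2,d_3;\alpha,\beta)$ is simple, $S$ is a polynomial in $\Lambda(\nu)$ and, $\Lambda(\nu)$ being block-diagonal, $S$ shares its block structure (equivalently, the $d_1+d_2+d_3$ diagonal blocks have pairwise disjoint spectra, so nothing off the block diagonal can commute with $\Lambda(\nu)$).

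It then remains to intersect, within each block, the centralizer of the $\Lambda(\nu)$-block with the anti-commutant of the corresponding block of $K$. For the $2\times 2$ hyperbolic ($d_1$) and elliptic ($d_2$) blocks this is a direct computation: the respective centralizers are $\{aI+b(\begin{smallmatrix}0&1\\1&0\end{smallmatrix})\}$ and $\{aI+b(\begin{smallmatrix}0&1\\-1&0\end{smallmatrix})\}$, and anti-commutation with $\diag(1,-1)$ kills the scalar term $a$, leaving precisely the one-parameter $\fT$-block. I expect the genuine obstacle to be the $4\times 4$ block of a complex quadruplet ($d_3$), whose centralizer is four-dimensional, spanned by $I,\Lambda_\iota,\Lambda_\iota^2,\Lambda_\iota^3$. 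Here I would use the $K$-grading: conjugation by the block of $K$ is an involution for which $\Lambda_\iota$ is odd (it anti-commutes with that block), so even powers of $\Lambda_\iota$ commute with $K$ and odd powers anti-commute; thus the anti-commuting part of the centralizer is exactly two-dimensional, spanned by $\Lambda_\iota$ and $\Lambda_\iota^3$. Writing $\Lambda_\iota=\alpha A+\beta B$ with the fixed matrices $A$, $B$ read off from the $d_3$-block of $\fT$, one short check gives $AB=BA$ (so $A$, $B$, and hence every $qA+rB$, lie in the centralizer) and a second gives $AK=-KA$, $BK=-KB$ at the block level; therefore the two-dimensional space $\{qA+rB\}$ is contained in, and by the dimension count fills, the anti-commuting part of the centralizer. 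Assembling the three block types reconstructs $\bc_0=\fT(d_1,d_2,d_3;q,r)$ bordered by the zero $m$-block, with $q\in\mR^{d_1+d_3}$ and $r\in\mR^{d_2+d_3}$ arbitrary, which completes the description.
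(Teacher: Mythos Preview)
Your proposal is correct and follows the same overall route as the paper: start from the description~\eqref{eqkernel} of $\fR_\nu^{-G}$ and solve the linear constraints on $a_0$, $b_0$, $\bc_0$ block by block. The paper's own proof is terser---it simply writes down the full centralizer of $\bOmega(\nu)$ as an explicit four-parameter family of block-diagonal matrices (with parameters $\Gamma,w,q,t,r$) and then observes that anti-commutation with $\bL$ forces $\Gamma=0$, $w=0$, $t=0$. Your treatment of the $4\times4$ quadruplet blocks via the $K$-grading of the polynomial algebra $\mR[\Lambda_\iota]$ (even powers commute with $K$, odd powers anti-commute) is a more conceptual way to reach the same two-dimensional answer than the paper's bare computation, and it makes transparent why exactly half of the centralizer survives; the paper's explicit matrix display, on the other hand, has the advantage of being immediately checkable by hand. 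Either way the content is the same.
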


\begin{proof}
This lemma easily follows from~\eqref{eqkernel}. The key point is to determine
the \emph{centralizer}
$\bigl\{\bc_0\in\gl(N,\mR) \bigm| \bOmega(\nu)\bc_0=\bc_0\bOmega(\nu)\bigr\}$
of $\bOmega(\nu)$. One can verify that this centralizer is the space of all
the block diagonal matrices $\bc_0$ with $1+d_1+d_2+d_3$ blocks of the form
\begin{gather*}
\Gamma\in\gl(m,\mR), \qquad
\begin{pmatrix} w_k & q_k \\ q_k & w_k \end{pmatrix}, \quad
1\leq k\leq d_1, \qquad
\begin{pmatrix} t_l & r_l \\ -r_l & t_l \end{pmatrix}, \quad
1\leq l\leq d_2, \\
\begin{pmatrix}
w_{d_1+\iota} & q_{d_1+\iota} & t_{d_2+\iota} & r_{d_2+\iota} \\
q_{d_1+\iota} & w_{d_1+\iota} & r_{d_2+\iota} & t_{d_2+\iota} \\
-t_{d_2+\iota} & -r_{d_2+\iota} & w_{d_1+\iota} & q_{d_1+\iota} \\
-r_{d_2+\iota} & -t_{d_2+\iota} & q_{d_1+\iota} & w_{d_1+\iota}
\end{pmatrix}, \quad 1\leq\iota\leq d_3.
\end{gather*}
For both values of $\delta$, the condition $\bc_0\bL=-\bL\bc_0$ is tantamount
to that $\Gamma=0$, $w_k=0$ ($1\leq k\leq d_1+d_3$), and $t_l=0$
($1\leq l\leq d_2+d_3$).
\end{proof}

The centralizer of $\bOmega(\nu)$ would be larger if the spectrum of
$\Lambda(\nu)$ were not simple.

\subsection{Notation:\ Part~2}\label{nota2}

From now on, we will consider reversible systems with phase space variables
$x\in\mT^n$, $y\in\cY\subset\mR^m$ ($\cY$ being an open domain), and
$z\in\cZ=\cO_{2p}(0)$ where $n\geq 1$, $m\geq 0$, and $p\geq 1$. These
reversible systems will be supposed to depend on an external parameter
$\nu\in\cN\subset\mR^s$ ($\cN$ being an open domain), $s\geq 0$, and on a
small perturbation parameter $\vare\geq 0$. The reversing involution will be
$G:(x,y,z)\mapsto(-x,\delta y,Kz)$ with $\delta=\pm 1$ and
$K=\diag(1,-1,\ldots,1,-1)$. The neighborhood $\cZ$ of $0\in\mR^{2p}$ is
assumed to be invariant under the linear involution $z\mapsto Kz$. Similarly,
for $\delta=-1$, the domain $\cY$ is assumed to contain the origin $0$ and to
be symmetric with respect to the origin.

We will look for reducible invariant $n$-tori $\cT$ (see
Definition~\ref{defFloq}) of such systems. Since $\codim\cT=m+2p$ and
\[
\text{$\dim\Fix G=m+p$ (for $\delta=1$) \quad or \quad
$\dim\Fix G=p$ (for $\delta=-1$)},
\]
we are within the non-extreme reversible context~1
($\frac{1}{2}\codim\cT\leq\dim\Fix G<\codim\cT$) for $\delta=1$ and within the
non-extreme reversible context~2 ($0<\dim\Fix G<\frac{1}{2}\codim\cT$) for
$\delta=-1$ and $m\geq 1$.

\section{The Reversible Context~2}\label{cont2}

Now we are in the position to formulate and prove the main result of this
paper. In the notation of Subsection~\ref{nota2}, let $m\geq 1$ and
$\delta=-1$, so that the reversing involution is $G:(x,y,z)\mapsto(-x,-y,Kz)$.
Consider a family of $G$-reversible systems on $\mT^n\times\cY\times\cZ$ of
the form
\begin{equation}
\begin{aligned}
\dot{x} &= H(y,\nu)+f^\sharp(x,y,z,\nu)+\vare f(x,y,z,\nu,\vare), \\
\dot{y} &= P(y,\nu)+g^\sharp(x,y,z,\nu)+\vare g(x,y,z,\nu,\vare), \\
\dot{z} &= Q(y,\nu)z+h^\sharp(x,y,z,\nu)+\vare h(x,y,z,\nu,\vare)
\end{aligned}
\label{eqsystem}
\end{equation}
(with $2p\times 2p$ matrix-valued function $Q$), where $f^\sharp=O(z)$,
$g^\sharp=O_2(z)$, and $h^\sharp=O_2(z)$ [cf.~\eqref{eqextreme2}].
Reversibility of~\eqref{eqsystem} with respect to $G$ means that
\begin{gather*}
H(-y,\nu)\equiv H(y,\nu), \quad P(-y,\nu)\equiv P(y,\nu), \\
Q(-y,\nu)K\equiv-KQ(y,\nu)
\end{gather*}
and
\begin{align*}
f^\sharp(-x,-y,Kz,\nu) &\equiv f^\sharp(x,y,z,\nu), &
f(-x,-y,Kz,\nu,\vare) &\equiv f(x,y,z,\nu,\vare), \\
g^\sharp(-x,-y,Kz,\nu) &\equiv g^\sharp(x,y,z,\nu), &
g(-x,-y,Kz,\nu,\vare) &\equiv g(x,y,z,\nu,\vare), \\
h^\sharp(-x,-y,Kz,\nu) &\equiv -Kh^\sharp(x,y,z,\nu), &
h(-x,-y,Kz,\nu,\vare) &\equiv -Kh(x,y,z,\nu,\vare).
\end{align*}
All the functions $H$, $P$, $Q$, $f^\sharp$, $g^\sharp$, $h^\sharp$, $f$, $g$,
and $h$ are assumed to be analytic in all their arguments.

Let the spectrum of the matrix $Q(0,\nu)$ anti-commuting with $K$ be simple
for any $\nu\in\cN$, and
\[
Q(0,\nu)=\fT\bigl(d_1,d_2,d_3;\alpha(\nu),\beta(\nu)\bigr),
\]
where the numbers $d_1\geq 0$, $d_2\geq 0$, $d_3\geq 0$ do not depend on $\nu$
($d_1+d_2+2d_3=p$), $\alpha_k(\nu)>0$ for all $1\leq k\leq d_1+d_3$,
$\nu\in\cN$, and $\beta_l(\nu)>0$ for all $1\leq l\leq d_2+d_3$, $\nu\in\cN$.
Introduce the notation $d_2+d_3=d$.

Fix an \emph{arbitrary} (possibly, empty) subset of indices
\[
\fZ\subset\{1;2;\ldots;d_1+d_3\}
\]
consisting of $\kappa$ elements ($0\leq\kappa\leq d_1+d_3$). We will write
\[
\alpha_+=(\alpha_k \mid k\in\fZ), \qquad \alpha_-=(\alpha_k \mid k\notin\fZ).
\]
Below, we will also use similar notation without special mention for vector
quantities in $\mR^{d_1+d_3}$ denoted by $\alpha^0$, $\alpha'$, $\alpha^\vee$,
$\alpha^\star$, and $q$.

\begin{thm}\label{thC2}
Suppose that
\begin{itemize}
\item $s\geq n+m+d+\kappa$,
\item $P(0,\nu^0)=0$ for some $\nu^0\in\cN$,
\item the vectors $\omega=H(0,\nu^0)\in\mR^n$ and
$\beta^0=\beta(\nu^0)\in\mR^d$ satisfy the following Diophantine condition:
there exist constants $\tau>n-1$ and $\gamma>0$ such that
\begin{equation}
\bigl| \langle j,\omega\rangle+\langle J,\beta^0\rangle \bigr| \geq
\gamma|j|^{-\tau}
\label{eqnewDioph}
\end{equation}
for all $j\in\mZ^n\setminus\{0\}$ and $J\in\mZ^d$, $|J|\leq 2$
\textup{[}cf.~\eqref{eqDioph}\textup{]},
\item the mapping
\[
\nu\mapsto\bigl( H(0,\nu), \, P(0,\nu), \, \beta(\nu), \, \alpha_+(\nu) \bigr)
\]
is submersive at point $\nu^0$, i.e.
\begin{equation}
\left. \rank\frac{
\partial\bigl( H(0,\nu), \, P(0,\nu), \, \beta(\nu), \, \alpha_+(\nu) \bigr)
}{\partial\nu} \right|_{\nu=\nu^0} = n+m+d+\kappa.
\label{eqrank}
\end{equation}
\end{itemize}
Then for sufficiently small $\vare$ there exists an
$(s-n-m-d-\kappa)$-dimensional analytic surface $\cS_\vare\subset\cN$ such
that for any $\nu\in\cS_\vare$, system~\eqref{eqsystem} admits an analytic
reducible invariant $n$-torus carrying quasi-periodic motions with frequency
vector $\omega$. The Floquet exponents of this torus are
\begin{equation}
\begin{gathered}
\underbrace{0,\ldots,0}_m\,, \qquad
\pm\alpha'_1(\nu,\vare),\ldots,\pm\alpha'_{d_1}(\nu,\vare), \qquad
\pm i\beta^0_1,\ldots,\pm i\beta^0_{d_2}, \\
\pm\alpha'_{d_1+1}(\nu,\vare)\pm i\beta^0_{d_2+1},\ldots,
\pm\alpha'_{d_1+d_3}(\nu,\vare)\pm i\beta^0_{d_2+d_3}
\end{gathered}
\label{eqprime}
\end{equation}
\textup{[}cf.~\eqref{eqspec}\textup{]}, where $\alpha'_k(\nu,\vare)>0$ for all
$1\leq k\leq d_1+d_3$, $\nu\in\cS_\vare$, and
$\alpha'_+(\nu,\vare)\equiv\alpha^0_+$, i.e.,
$\alpha'_k(\nu,\vare)\equiv\alpha_k(\nu^0)$ for $k\in\fZ$ \textup{[}here and
henceforth, $\alpha^0=\alpha(\nu^0)\in\mR^{d_1+d_3}$\textup{]}. These tori and
the numbers $\alpha'_k(\nu,\vare)$, $k\notin\fZ$, depend analytically on
$\nu\in\cS_\vare$ and on $\sqvare$. At $\vare=0$, the surface $\cS_0$ contains
$\nu^0$ and all the tori are $\{y=0, \, z=0\}$.
\end{thm}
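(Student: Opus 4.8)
The strategy is to reduce Theorem~\ref{thC2} to an application of Moser's modifying terms Theorem~\ref{thmain}, exactly as Lemma~\ref{lemcent} and Lemma~\ref{lemfR} were designed to permit, and then to use the submersivity hypothesis~\eqref{eqrank} together with an implicit function argument to kill the modifying terms on a surface $\cS_\vare$. First I would set up the phase variables of Section~\ref{Moser} by writing $X=(y,z)\in\cO_N(0)$ with $N=m+2p$, so that the block structure of $\bL=\diag(\delta I_m,K)=\diag(-I_m,K)$ (recall $\delta=-1$ here) and of the Floquet matrix matches Subsection~\ref{linalg} with the prescribed $\fT(d_1,d_2,d_3;\cdot,\cdot)$ normal form. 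The unperturbed Floquet matrix is $\bOmega(\nu)=\diag\bigl(0_{m\times m},Q(0,\nu)\bigr)$, and I must check that Condition~$\Omega$ holds at (and near) $\nu^0$: diagonalizability and constancy of the imaginary parts $\beta^0$ follow from the simplicity of the spectrum of $Q(0,\nu)$ and from the hypothesis that the $\beta_l$ are genuine parameters, while the Diophantine inequality~\eqref{eqDioph} is precisely~\eqref{eqnewDioph}. By Lemma~\ref{lemcent}, conditions~\ref{invariant}--\ref{restore} are satisfied for the reversible choice $\fM=\fL^{-G}$, so Theorem~\ref{thmain} applies once the system is written in the Floquet-normalized form $\dot x=\omega+\vare f$, $\dot X=\bOmega(\nu)X+\vare F$.

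The genuine preliminary work is arranging that normalized form. The system~\eqref{eqsystem} is not yet of the shape required by Theorem~\ref{thmain}: the frequency is $H(y,\nu)$ rather than the constant $\omega$, the $y$-drift $P(y,\nu)$ is nonzero, and there are the non-$\vare$ terms $f^\sharp,g^\sharp,h^\sharp$ of order $O(z)$ or $O_2(z)$. My plan is to treat $\nu^0$ as a base point where $P(0,\nu^0)=0$ and $H(0,\nu^0)=\omega$, expand $H(y,\nu)=\omega+O(y,\nu-\nu^0)$ and $P(y,\nu)=O(y,\nu-\nu^0)$ about the base torus $\{y=0,z=0\}$, and absorb all deviations into a single analytic perturbation of size controlled by $\vare$ and $|\nu-\nu^0|$. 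Because $f^\sharp=O(z)$ and $g^\sharp,h^\sharp=O_2(z)$ vanish to the appropriate order on $\cT$, the combined perturbation placed in front of a rescaled $\vare$ lies in $\fM=\fL^{-G}$ by the reversibility relations displayed after~\eqref{eqsystem}. The natural scaling here is $z=\sqsq\,\zeta$ (this is where the analyticity in $\sqvare$ announced in the statement originates): it balances the linear Floquet term $Q(0,\nu)z$ against the quadratic $O_2(z)$ remainders so that the whole right-hand side becomes $\bOmega(\nu)X+\vare(\text{analytic, reversible})$. I would verify that after this rescaling the hypotheses of Theorem~\ref{thmain} hold verbatim.

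Applying Theorem~\ref{thmain} yields unique analytic modifying terms $\lambda(\nu,\vare)\in\mR^n$, $\mu(\nu,\vare)\in\mR^N$, $\bM(\nu,\vare)\in\gl(N,\mR)$ lying in the finite-dimensional space $\fR_\nu^{-G}$ computed explicitly in Lemma~\ref{lemfR}. By that lemma, for $\delta=-1$ the component $\mu$ reduces to a vector $b_0\in\mR^m$ (the $z$-part vanishes), and $\bM$ reduces to $\fT(d_1,d_2,d_3;q,r)$ on the $z$-block, i.e. to the $d_1+d_3$ real shifts $q$ of the $\alpha$'s and the $d_2+d_3=d$ shifts $r$ of the $\beta$'s; the $\lambda\in\mR^n$ shifts the frequency. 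The torus we want is the one on which all these modifying terms that cannot be interpreted as admissible Floquet data are forced to vanish: concretely, I must solve $\lambda(\nu,\vare)=0$ ($n$ equations, to restore the prescribed $\omega$), $b_0(\nu,\vare)=0$ ($m$ equations, to keep $\{y=0\}$ invariant), $r(\nu,\vare)=0$ ($d$ equations, so that the imaginary parts stay at $\beta^0$), and $q_k(\nu,\vare)=0$ for $k\in\fZ$ ($\kappa$ equations, pinning the prescribed $\alpha_+\equiv\alpha^0_+$). That is a total of $n+m+d+\kappa$ equations in the $s$ parameters $\nu$.

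The main obstacle—and the crux of the whole theorem—is showing that this system of $n+m+d+\kappa$ equations defines a smooth $(s-n-m-d-\kappa)$-dimensional surface $\cS_\vare$ through a point near $\nu^0$. The plan is an implicit function theorem applied in the $\nu$-variables with $\vare$ (in fact $\sqvare$) as a small parameter. At $\vare=0$ the modifying terms vanish and the torus is the integrable one $\{y=0,z=0\}$, so the map $(\nu,\vare)\mapsto\bigl(\lambda,b_0,r,q_+\bigr)$ at $\vare=0$ is, to leading order, the derivative of the unperturbed data $\bigl(H(0,\nu)-\omega,\,P(0,\nu),\,\beta(\nu)-\beta^0,\,\alpha_+(\nu)-\alpha^0_+\bigr)$. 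Its Jacobian at $\nu^0$ is exactly the rank-$(n+m+d+\kappa)$ matrix~\eqref{eqrank}. Hence the full $\vare$-dependent map is a submersion near $\nu^0$ for all sufficiently small $\vare$, and its zero set is an analytic surface $\cS_\vare$ of the claimed codimension depending analytically on $\sqvare$, with $\cS_0\ni\nu^0$. On $\cS_\vare$ the modified system coincides with the original, so by the last paragraph of Subsection~\ref{form} the torus $\{\Xi=0\}$ is a genuine reducible invariant $n$-torus of~\eqref{eqsystem} with frequency $\omega$ and Floquet matrix $\bOmega(\nu)+\vare\bM(\nu,\vare)$; its exponents are the $0$'s, the unshifted $\pm i\beta^0_l$, the pinned $\pm\alpha^0_+$, and the free $\pm\alpha'_k=\pm(\alpha_k(\nu)+\vare q_k)$ for $k\notin\fZ$, which after the implicit-function parametrization depend analytically on $\nu\in\cS_\vare$ and on $\sqvare$, as in~\eqref{eqprime}. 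The one point deserving care is that the surviving, unconstrained shifts $q_-$ (for $k\notin\fZ$) and the $\alpha'_-$ they produce remain positive for small $\vare$, which is immediate since $\alpha_k(\nu^0)>0$ and the dependence is continuous.
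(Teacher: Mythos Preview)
Your overall architecture---Moser's theorem followed by an implicit function argument---is the right one, but two of your preliminary moves do not go through as written.

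First, your choice $\bOmega(\nu)=\diag\bigl(0_{m\times m},Q(0,\nu)\bigr)$ violates Condition~$\Omega$: that condition requires the \emph{imaginary} parts of the eigenvalues of $\bOmega(\nu)$ to be independent of $\nu$, whereas $Q(0,\nu)=\fT\bigl(d_1,d_2,d_3;\alpha(\nu),\beta(\nu)\bigr)$ has $\nu$-dependent $\beta(\nu)$---indeed, the submersivity hypothesis~\eqref{eqrank} forces $\beta$ to move with $\nu$. Your sentence ``constancy of the imaginary parts $\beta^0$ follow\ldots\ from the hypothesis that the $\beta_l$ are genuine parameters'' is the opposite of what is needed. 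The paper resolves this by taking the Floquet matrix fed into Theorem~\ref{thmain} to be $\Lambda(\Phi)=\fT(d_1,d_2,d_3;\alpha^\star,\beta^0)$ with $\beta^0$ \emph{fixed}, and with an \emph{auxiliary} parameter $\Phi\in\mR^{d_1+d_3-\kappa}$ controlling only the free real parts $\alpha^\star_-=\alpha^0_-+\Phi$ (while $\alpha^\star_+=\alpha^0_+$). Thus only real parts depend on parameters, exactly as Condition~$\Omega$ permits; the deviation of $Q(0,\nu)$ from $\Lambda(\Phi)$ is not encoded in $\bOmega$ at all but is absorbed in the final matching step.

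Second, rescaling only $z=\sqvare\,\zeta$ is not enough to put the system into the shape $\dot x=\omega+\vare f$, $\dot X=\bOmega X+\vare F$. The non-$\vare$ deviations $H(y,\nu)-\omega$ and $P(y,\nu)$ contain both $O(|\nu-\nu^0|)$ pieces and, by evenness, $O_2(y)$ pieces; neither is $O(\vare)$ without further work. The paper first straightens the parameter space via~\eqref{eqrank} into coordinates $(\sigma,\psi,\rho,\phi,\chi)$ with $H(0,\nu)=\omega+\sigma$, $P(0,\nu)=\psi$, $\beta(\nu)=\beta^0+\rho$, $\alpha_+(\nu)=\alpha^0_++\phi$, and then performs Moser's trick of rescaling \emph{all three} of $y=\sqvare\,Y$, $z=\sqvare\,Z$, and $\psi=\sqvare\,\Psi$. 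Only then does the system assume the Moser form~\eqref{eqPhi} with perturbation of size $\sqvare$; Theorem~\ref{thmain} is applied to that system (with the $\sigma$ and $\Psi$ shifts \emph{removed}), and the implicit-function step is not ``set the modifying terms to zero'' but rather ``match the modified system~\eqref{eqget} back to the rescaled original~\eqref{eqtilde}'': one solves $\sqvare\,\lambda=\sigma$, $\sqvare\,\mu=\Psi$, $\sqvare\,r=\rho$, $\sqvare\,q_+=\phi$, together with one extra equation $\alpha^0_-+\Phi+\sqvare\,q_-=\alpha_-\bigl(\nu(\sigma,0,\rho,\phi,\chi)\bigr)$ determining $\Phi$. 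That last equation is what eliminates the auxiliary $\Phi$ and produces the free real parts $\alpha'_-$ announced in the statement.
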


\begin{proof}
As in the case of the extreme reversible context~2~\cite{S11}, to deduce
Theorem~\ref{thC2} from Theorem~\ref{thmain}, it in fact suffices to invoke
the implicit function theorem twice.

By virtue of~\eqref{eqrank}, one can introduce in $\cN$ near $\nu^0$ a new
coordinate system
\[
\bigl( \sigma\in\cO_n(0), \: \psi\in\cO_m(0), \: \rho\in\cO_d(0), \:
\phi\in\cO_\kappa(0), \: \chi\in\cO_{s-n-m-d-\kappa}(0) \bigr)
\]
such that $\nu$ depends analytically on $(\sigma,\psi,\rho,\phi,\chi)$, the
point $\nu=\nu^0$ corresponds to $\sigma=0$, $\psi=0$, $\rho=0$, $\phi=0$,
$\chi=0$, and
\begin{align*}
H\bigl(0, \, \nu(\sigma,\psi,\rho,\phi,\chi)\bigr) &\equiv \omega+\sigma, &
P\bigl(0, \, \nu(\sigma,\psi,\rho,\phi,\chi)\bigr) &\equiv \psi, \\
\beta\bigl(\nu(\sigma,\psi,\rho,\phi,\chi)\bigr) &\equiv \beta^0+\rho, &
\alpha_+\bigl(\nu(\sigma,\psi,\rho,\phi,\chi)\bigr) &\equiv \alpha^0_++\phi.
\end{align*}
Since the functions $H$ and $P$ are even in $y$, we have
\begin{equation}
H\bigl(y, \, \nu(\sigma,\psi,\rho,\phi,\chi)\bigr)-\omega-\sigma=O_2(y),
\qquad P\bigl(y, \, \nu(\sigma,\psi,\rho,\phi,\chi)\bigr)-\psi=O_2(y).
\label{eqeven}
\end{equation}

Following Moser's simple but very efficient trick \cite[Section~6b)]{M67}, set
for $\vare>0$
\[
y=\sqsq Y, \quad z=\sqsq Z, \quad \psi=\sqsq\Psi,
\]
where the variable $Y$ ranges in a certain fixed ($\vare$-independent) domain
in $\mR^m$ containing $0$ and symmetric with respect to $0$, the variable $Z$
ranges in a certain fixed ($\vare$-independent) domain in $\mR^{2p}$
containing $0$ and invariant under the linear involution $Z\mapsto KZ$, and
the parameter $\Psi$ ranges in a certain fixed ($\vare$-independent)
neighborhood of $0$ in $\mR^m$. It is not hard to verify using the
relation~\eqref{eqeven} for $P$ that in the variables $(x,Y,Z)$,
systems~\eqref{eqsystem} take the form
\begin{equation}
\begin{aligned}
\dot{x} &= \omega+\sigma+
\sqsq\Tf(x,Y,Z,\sigma,\sqsq\Psi,\rho,\phi,\chi,\sqsq), \\
\dot{Y} &= \Psi+
\sqsq\Tg(x,Y,Z,\sigma,\sqsq\Psi,\rho,\phi,\chi,\sqsq), \\
\dot{Z} &= Q\bigl(0, \, \nu(\sigma,0,\rho,\phi,\chi)\bigr)Z+
\sqsq\Th(x,Y,Z,\sigma,\Psi,\rho,\phi,\chi,\sqsq)
\end{aligned}
\label{eqtilde}
\end{equation}
with functions $\Tf$, $\Tg$, and $\Th$ analytic in all their arguments for
$\vare\geq 0$ (not merely for $\vare>0$) sufficiently small. One cannot write
$\Th(x,Y,Z,\sigma,\sqsq\Psi,\rho,\phi,\chi,\sqsq)$ because $\sqsq\Th$
incorporates the term
$Q\bigl(0, \, \nu(\sigma,\sqsq\Psi,\rho,\phi,\chi)\bigr)Z -
Q\bigl(0, \, \nu(\sigma,0,\rho,\phi,\chi)\bigr)Z$. Systems~\eqref{eqtilde}
are reversible with respect to the involution $G:(x,Y,Z)\mapsto(-x,-Y,KZ)$.

Obviously,
\[
Q\bigl(0, \, \nu(\sigma,0,\rho,\phi,\chi)\bigr)=
\fT(d_1,d_2,d_3;\alpha^\vee,\beta^0+\rho)
\]
where $\alpha^\vee_+=\alpha^0_++\phi$ and
$\alpha^\vee_-=\alpha_-\bigl(\nu(\sigma,0,\rho,\phi,\chi)\bigr)$. Introduce a
new parameter $\Phi\in\cO_{d_1+d_3-\kappa}(0)$ where the neighborhood
$\cO_{d_1+d_3-\kappa}(0)$ does not depend on $\vare$. Let
\[
\Lambda(\Phi)=\fT(d_1,d_2,d_3;\alpha^\star,\beta^0)
\]
with $\alpha^\star_+=\alpha^0_+$ and $\alpha^\star_-=\alpha^0_-+\Phi$.

Taking Lemmas~\ref{lemcent} and~\ref{lemfR} into account, apply Moser's
Theorem~\ref{thmain} to the family of systems
\begin{equation}
\begin{aligned}
\dot{x} &= \omega+\sqsq\Tf(x,Y,Z,\sigma,\sqsq\Psi,\rho,\phi,\chi,\sqsq), \\
\dot{Y} &= \sqsq\Tg(x,Y,Z,\sigma,\sqsq\Psi,\rho,\phi,\chi,\sqsq), \\
\dot{Z} &= \Lambda(\Phi)Z+
\sqsq\Th(x,Y,Z,\sigma,\Psi,\rho,\phi,\chi,\sqsq)
\end{aligned}
\label{eqPhi}
\end{equation}
reversible with respect to the involution $G:(x,Y,Z)\mapsto(-x,-Y,KZ)$ and
depending on the parameters
$(\sigma,\Psi,\rho,\phi,\chi,\Phi)\in\cO_{s+d_1+d_3-\kappa}(0)$. For $\vare$
sufficiently small, we obtain modified systems
\begin{equation}
\begin{aligned}
\dot{x} &= \omega+\sqsq\Tf(x,Y,Z,\sigma,\sqsq\Psi,\rho,\phi,\chi,\sqsq) +
\sqsq\lambda(\sigma,\Psi,\rho,\phi,\chi,\Phi,\sqsq), \\
\dot{Y} &= \sqsq\Tg(x,Y,Z,\sigma,\sqsq\Psi,\rho,\phi,\chi,\sqsq) +
\sqsq\mu(\sigma,\Psi,\rho,\phi,\chi,\Phi,\sqsq), \\
\dot{Z} &= \Lambda(\Phi)Z+
\sqsq\Th(x,Y,Z,\sigma,\Psi,\rho,\phi,\chi,\sqsq) +
\sqsq M(\sigma,\Psi,\rho,\phi,\chi,\Phi,\sqsq)Z,
\end{aligned}
\label{eqget}
\end{equation}
where
\[
M=\fT(d_1,d_2,d_3;q,r)
\]
with
\[
q=q(\sigma,\Psi,\rho,\phi,\chi,\Phi,\sqsq), \quad
r=r(\sigma,\Psi,\rho,\phi,\chi,\Phi,\sqsq).
\]
Here the functions $\lambda$, $\mu$, $q$, $r$ are determined uniquely and
analytic in all their arguments, the values of these functions ranging in
$\mR^n$, $\mR^m$, $\mR^{d_1+d_3}$, $\mR^d$, respectively. After the coordinate
change
\begin{align*}
x &= \xi+\sqsq A(\xi,\sigma,\Psi,\rho,\phi,\chi,\Phi,\sqsq), \\
\begin{pmatrix} Y \\ Z \end{pmatrix} &=
\begin{pmatrix} \eta \\ \zeta \end{pmatrix} +
\sqsq B(\xi,\sigma,\Psi,\rho,\phi,\chi,\Phi,\sqsq) +
\sqsq\bC(\xi,\sigma,\Psi,\rho,\phi,\chi,\Phi,\sqsq)
\begin{pmatrix} \eta \\ \zeta \end{pmatrix}
\end{align*}
[\,$\xi\in\mT^n$, $\eta\in\cO_m(0)$, and $\zeta\in\cO_{2p}(0)$ being new phase
space variables], system~\eqref{eqget} takes the form
\begin{equation}
\begin{aligned}
\dot{\xi} &= \omega+\sqsq O(\eta,\zeta), \\
\dot{\eta} &= \sqsq O_2(\eta,\zeta), \\
\dot{\zeta} &= \Lambda(\Phi)\zeta+\sqsq O_2(\eta,\zeta)
\end{aligned}
\label{eqgoal}
\end{equation}
with the right-hand sides analytic in $\xi$, $\eta$, $\zeta$, $\sigma$,
$\Psi$, $\rho$, $\phi$, $\chi$, $\Phi$, and $\sqvare$. This coordinate change
commutes with the involution $G$. The functions $A$, $B$, and $\bC$ are
analytic in all their arguments, the values of these functions ranging in
$\mR^n$, $\mR^{m+2p}$, and $\gl(m+2p,\mR)$, respectively.

Compared with the notation of Section~\ref{Moser}, here
\begin{itemize}
\item $N=m+2p$,
\item $(Y,Z)$ plays the role of $X$,
\item $(\eta,\zeta)$ plays the role of $\Xi$,
\item $s+d_1+d_3-\kappa$ plays the role of $s$ and $\cO_{s+d_1+d_3-\kappa}(0)$
plays the role of $\cN$,
\item $(\sigma,\Psi,\rho,\phi,\chi,\Phi)$ plays the role of $\nu$,
\item $\sqvare$ plays the role of $\vare$,
\item $\Tf$ plays the role of $f$,
\item $\bigl(\,\Tg,\Th\,\bigr)$ plays the role of $F$,
\item the block diagonal matrix with blocks $0_{m\times m}$ and
$\Lambda(\Phi)$ plays the role of $\bOmega(\nu)$,
\item $\beta^0$ plays the role of $\beta$,
\item $\bigl(\, \mu,\underbrace{0,\ldots,0}_{2p} \,\bigr)$ plays the role of
$\mu$,
\item the block diagonal matrix with blocks $0_{m\times m}$ and $M$ plays the
role of $\bM$.
\end{itemize}

Now one can use $\nu$ and $\alpha_-$ to ``compensate'' for the modifying terms
$\lambda$, $\mu$, and $M$. Observe that the systems~\eqref{eqtilde}
and~\eqref{eqget} coincide if $\sqsq\lambda=\sigma$, $\sqsq\mu=\Psi$, and
$\Lambda(\Phi)+\sqsq M=Q\bigl(0, \, \nu(\sigma,0,\rho,\phi,\chi)\bigr)$, i.e.
\begin{align*}
\sqsq\lambda(\sigma,\Psi,\rho,\phi,\chi,\Phi,\sqsq) &= \sigma, \\
\sqsq\mu(\sigma,\Psi,\rho,\phi,\chi,\Phi,\sqsq) &= \Psi, \\
\sqsq r(\sigma,\Psi,\rho,\phi,\chi,\Phi,\sqsq) &= \rho, \\
\sqsq q_+(\sigma,\Psi,\rho,\phi,\chi,\Phi,\sqsq) &= \phi, \\
\alpha^0_-+\Phi+\sqsq q_-(\sigma,\Psi,\rho,\phi,\chi,\Phi,\sqsq) &=
\alpha_-\bigl(\nu(\sigma,0,\rho,\phi,\chi)\bigr).
\end{align*}
According to the implicit function theorem, for sufficiently small $\vare$ and
any $\chi$, one can solve this system of equations with respect to $\sigma$,
$\Psi$, $\rho$, $\phi$, and $\Phi$:
\begin{align*}
\sigma &= \sqsq\Sigma^1(\chi,\sqsq), \\
\Psi &= \sqsq\Sigma^2(\chi,\sqsq), \\
\rho &= \sqsq\Sigma^3(\chi,\sqsq), \\
\phi &= \sqsq\Sigma^4(\chi,\sqsq), \\
\Phi &= \alpha_-\bigl(\nu(0,0,0,0,\chi)\bigr)-\alpha^0_-+\sqsq\Pi(\chi,\sqsq)
\end{align*}
with analytic functions $\Sigma^1$, $\Sigma^2$, $\Sigma^3$, $\Sigma^4$, and
$\Pi$.

We conclude that for $\vare>0$ sufficiently small and any
$\chi\in\cO_{s-n-m-d-\kappa}(0)$ with
\begin{equation}
\begin{aligned}
\sigma &= \sqsq\Sigma^1(\chi,\sqsq), & \psi &= \vare\Sigma^2(\chi,\sqsq), \\
\rho &= \sqsq\Sigma^3(\chi,\sqsq), & \phi &= \sqsq\Sigma^4(\chi,\sqsq),
\end{aligned}
\label{eqsurf}
\end{equation}
the coordinate transformation
\begin{equation}
\begin{aligned}
x &= \xi+\sqsq A(\xi,\sigma,\Psi,\rho,\phi,\chi,\Phi,\sqsq), \\
\begin{pmatrix} y \\ z \end{pmatrix} &=
\sqsq\begin{pmatrix} \eta \\ \zeta \end{pmatrix} +
\vare B(\xi,\sigma,\Psi,\rho,\phi,\chi,\Phi,\sqsq) +
\vare\bC(\xi,\sigma,\Psi,\rho,\phi,\chi,\Phi,\sqsq)
\begin{pmatrix} \eta \\ \zeta \end{pmatrix}
\end{aligned}
\label{eqABbC}
\end{equation}
with
\[
\Psi=\psi/\sqvare, \quad
\Phi=\alpha_-\bigl(\nu(0,0,0,0,\chi)\bigr)-\alpha^0_-+\sqsq\Pi(\chi,\sqsq)
\]
casts the original system~\eqref{eqsystem} into a system of the
form~\eqref{eqgoal} with the right-hand sides analytic in $\xi$, $\eta$,
$\zeta$, $\sigma$, $\Psi$, $\rho$, $\phi$, $\chi$, $\Phi$, and $\sqvare$.
Clearly, the $n$-torus $\{\eta=0, \, \zeta=0\}$ is invariant under the flow of
system~\eqref{eqgoal}, and the dynamics on this torus determined by the
equation $\dot{\xi}=\omega$ is quasi-periodic with frequency vector $\omega$.
This torus is also reducible with the Floquet matrix
\[
\begin{pmatrix}
0_{m\times m} & 0_{m\times 2p} \\ 0_{2p\times m} & \Lambda(\Phi)
\end{pmatrix}
\]
whose eigenvalues have the form~\eqref{eqprime} with
\[
\alpha'_+=\alpha^0_+, \qquad
\alpha'_-=\alpha^0_-+\Phi=
\alpha_-\bigl(\nu(0,0,0,0,\chi)\bigr)+\sqsq\Pi(\chi,\sqsq).
\]
Since the coordinate change~\eqref{eqABbC} commutes with the involution $G$,
the torus $\{\eta=0, \, \zeta=0\}$ is invariant under this involution as well,
and the restriction of $G$ to this torus has the form $\xi\mapsto-\xi$.

In the original coordinates $(x,y,z)$, the torus $\{\eta=0, \, \zeta=0\}$ is
given by the equations
\begin{align*}
x &= \xi+\sqsq A(\xi,\sigma,\Psi,\rho,\phi,\chi,\Phi,\sqsq), \\
\begin{pmatrix} y \\ z \end{pmatrix} &=
\vare B(\xi,\sigma,\Psi,\rho,\phi,\chi,\Phi,\sqsq)
\end{align*}
($\xi\in\mT^n$) and depends analytically on $\chi$ and $\sqvare$ for
$\vare\geq 0$ (not merely for $\vare>0$) sufficiently small.
Equations~\eqref{eqsurf} determine the desired surface $\cS_\vare$. The
surface $\cS_0$ has the form $\{\sigma=0, \, \psi=0, \, \rho=0, \, \phi=0\}$
and passes through the point $\nu^0$ corresponding to $\chi=0$. For any
$\chi$, the torus $\{\eta=0, \, \zeta=0\}$ for $\vare=0$ coincides with
$\{y=0, \, z=0\}$.
\end{proof}

Most probably, the invariant $n$-tori in the setting of Theorem~\ref{thC2} in
fact depend analytically on $\vare$, not merely on $\sqvare$, but it would be
hardly possible to prove that within Moser's approach.

Theorem~\ref{thC2} describes the persistence of the unperturbed reducible
invariant torus $\{y=0, \, z=0, \, \nu=\nu^0\}$ with the preservation of
\begin{itemize}
\item the frequencies $\omega_1,\ldots,\omega_n$,
\item all the imaginary parts $\pm\beta^0_1,\ldots,\pm\beta^0_d$ of the
Floquet exponents,
\item an arbitrary subcollection (of length $\kappa$) of the pairs of the real
parts $\pm\alpha^0_1,\ldots,\pm\alpha^0_{d_1+d_3}$ of the Floquet exponents.
\end{itemize}
The situation resembles the so-called \emph{partial preservation of Floquet
exponents}~\cite{S07Stek} in the ``well developed'' contexts of KAM theory
(namely, the reversible context~1, the Hamiltonian context, the volume
preserving context, and the dissipative context). The preservation of
$\omega$, $\beta^0$, and $\alpha^0_+$ requires at least $n+m+d+\kappa$
external parameters. If $\kappa=0$ (one is not going to have any control over
the real parts of the Floquet exponents), then the minimal number of external
parameters is equal to $n+m+d=\fN-\dim\Fix G-d_1-d_3$ where $\fN=n+m+2p$ is
the phase space dimension. Indeed, since $\dim\Fix G=p$,
\[
\fN-\dim\Fix G-d_1-d_3=n+m+p-d_1-d_3=n+m+d_2+d_3=n+m+d.
\]
If $\kappa=d_1+d_3$ (all the Floquet exponents are to be completely
preserved), then the minimal number of external parameters is equal to
$n+m+d+d_1+d_3=\fN-\dim\Fix G$. In the latter case, it suffices to use Moser's
theorem with the matrix $\bOmega$ independent of $\nu$. Indeed, the parameter
$\Phi$ in~\eqref{eqPhi} is absent for $\kappa=d_1+d_3$. In
Section~\ref{Moser}, we allowed $\bOmega$ to depend on $\nu$ in order to relax
the preservation requirement for the real parts of the Floquet exponents
(cf.\ \cite[Section~6d)]{M67}) and accordingly to reduce the necessary number
of external parameters.

\section{The Reversible Context~1}\label{cont1}

We will also formulate the counterpart of Theorem~\ref{thC2} for the
reversible context~1. In the notation of Subsection~\ref{nota2}, let
$\delta=1$, so that the reversing involution is $G:(x,y,z)\mapsto(-x,y,Kz)$.
Consider a family of $G$-reversible systems on $\mT^n\times\cY\times\cZ$ of
the form
\begin{equation}
\begin{aligned}
\dot{x} &= H(y,\nu)+f^\sharp(x,y,z,\nu)+\vare f(x,y,z,\nu,\vare), \\
\dot{y} &= g^\sharp(x,y,z,\nu)+\vare g(x,y,z,\nu,\vare), \\
\dot{z} &= Q(y,\nu)z+h^\sharp(x,y,z,\nu)+\vare h(x,y,z,\nu,\vare)
\end{aligned}
\label{eqfamiliar}
\end{equation}
(with $2p\times 2p$ matrix-valued function $Q$), where $f^\sharp=O(z)$,
$g^\sharp=O_2(z)$, and $h^\sharp=O_2(z)$. Reversibility of~\eqref{eqfamiliar}
with respect to $G$ means that
\[
Q(y,\nu)K\equiv-KQ(y,\nu)
\]
and
\begin{align*}
f^\sharp(-x,y,Kz,\nu) &\equiv f^\sharp(x,y,z,\nu), &
f(-x,y,Kz,\nu,\vare) &\equiv f(x,y,z,\nu,\vare), \\
g^\sharp(-x,y,Kz,\nu) &\equiv -g^\sharp(x,y,z,\nu), &
g(-x,y,Kz,\nu,\vare) &\equiv -g(x,y,z,\nu,\vare), \\
h^\sharp(-x,y,Kz,\nu) &\equiv -Kh^\sharp(x,y,z,\nu), &
h(-x,y,Kz,\nu,\vare) &\equiv -Kh(x,y,z,\nu,\vare).
\end{align*}
All the functions $H$, $Q$, $f^\sharp$, $g^\sharp$, $h^\sharp$, $f$, $g$, and
$h$ are assumed to be analytic in all their arguments.

Let the spectrum of the matrix $Q(y,\nu)$ anti-commuting with $K$ be simple
for any $y\in\cY$ and $\nu\in\cN$, and
\[
Q(y,\nu)=\fT\bigl(d_1,d_2,d_3;\alpha(y,\nu),\beta(y,\nu)\bigr),
\]
where the numbers $d_1\geq 0$, $d_2\geq 0$, $d_3\geq 0$ do not depend on $y$
and $\nu$ ($d_1+d_2+2d_3=p$), $\alpha_k(y,\nu)>0$ for all
$1\leq k\leq d_1+d_3$, $y\in\cY$, $\nu\in\cN$, and $\beta_l(y,\nu)>0$ for all
$1\leq l\leq d_2+d_3$, $y\in\cY$, $\nu\in\cN$. Introduce the notation
$d_2+d_3=d$.

Fix an \emph{arbitrary} (possibly, empty) subset of indices
$\fZ\subset\{1;2;\ldots;d_1+d_3\}$ consisting of $\kappa$ elements
($0\leq\kappa\leq d_1+d_3$). We will write $\alpha_+=(\alpha_k \mid k\in\fZ)$
and $\alpha_-=(\alpha_k \mid k\notin\fZ)$ and also use similar notation for
vector quantities in $\mR^{d_1+d_3}$ denoted by $\alpha^0$ and $\alpha'$.

\begin{thm}\label{thC1}
Suppose that
\begin{itemize}
\item $m+s\geq n+d+\kappa$,
\item for some $y^0\in\cY$ and $\nu^0\in\cN$, the vectors
$\omega=H(y^0,\nu^0)\in\mR^n$ and $\beta^0=\beta(y^0,\nu^0)\in\mR^d$ satisfy
the following Diophantine condition: there exist constants $\tau>n-1$ and
$\gamma>0$ such that the inequalities~\eqref{eqnewDioph} hold for all
$j\in\mZ^n\setminus\{0\}$ and $J\in\mZ^d$, $|J|\leq 2$,
\item the mapping
\[
(y,\nu)\mapsto\bigl( H(y,\nu), \, \beta(y,\nu), \, \alpha_+(y,\nu) \bigr)
\]
is submersive at point $(y^0,\nu^0)$, i.e.
\[
\left. \rank\frac{
\partial\bigl( H(y,\nu), \, \beta(y,\nu), \, \alpha_+(y,\nu) \bigr)
}{\partial(y,\nu)} \right|_{y=y^0, \, \nu=\nu^0} = n+d+\kappa.
\]
\end{itemize}
Then for sufficiently small $\vare$, in $\mT^n\times\cY\times\cZ\times\cN$,
there exists an $(m+s-n-d-\kappa)$-parameter analytic family of analytic
reducible invariant $n$-tori of systems~\eqref{eqfamiliar}. These tori carry
quasi-periodic motions with frequency vector $\omega$. The Floquet exponents
of these tori have the form
\begin{gather*}
\underbrace{0,\ldots,0}_m\,, \qquad
\pm\alpha'_1,\ldots,\pm\alpha'_{d_1}, \qquad
\pm i\beta^0_1,\ldots,\pm i\beta^0_{d_2}, \\
\pm\alpha'_{d_1+1}\pm i\beta^0_{d_2+1},\ldots,
\pm\alpha'_{d_1+d_3}\pm i\beta^0_{d_2+d_3}.
\end{gather*}
For each torus, $\alpha'_k>0$ for all $1\leq k\leq d_1+d_3$ and
$\alpha'_+=\alpha^0_+$, i.e., $\alpha'_k=\alpha_k(y^0,\nu^0)$ for $k\in\fZ$
\textup{[}here $\alpha^0=\alpha(y^0,\nu^0)\in\mR^{d_1+d_3}$\textup{]}. These
tori and the numbers $\alpha'_k$, $k\notin\fZ$ \textup{(}which vary along the
family of the tori\textup{)}, depend analytically on $\sqvare$. At $\vare=0$,
all the tori have the form $\{y=\const, \, z=0, \, \nu=\const\}$ and among
them, there is the torus $\{y=y^0, \, z=0, \, \nu=\nu^0\}$.
\end{thm}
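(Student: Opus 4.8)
The plan is to mirror the proof of Theorem~\ref{thC2}, exploiting the fact that the reversible context~1 case with $\delta=1$ is structurally simpler because the group $\Fix G$ now contains the full $y$-direction. Since $\dot y=g^\sharp+\vare g$ has no unperturbed ``drift'' term $P(y,\nu)$ [compare \eqref{eqfamiliar} with \eqref{eqsystem}], the torus $\{y=\const,\,z=0\}$ is automatically $G$-invariant for \emph{every} value of the constant $y$, and there is no anti-action constraint $P(0,\nu)=0$ to impose. Accordingly, $y$ plays the role of an honest action-type parameter: I would treat $(y,\nu)\in\cY\times\cN$ jointly as an $(m+s)$-dimensional external parameter, which is precisely why the dimension count reads $m+s\ge n+d+\kappa$ rather than $s\ge n+m+d+\kappa$.

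First I would use the submersivity hypothesis on $(y,\nu)\mapsto\bigl(H(y,\nu),\,\beta(y,\nu),\,\alpha_+(y,\nu)\bigr)$ at $(y^0,\nu^0)$ to introduce, via the implicit function theorem, new coordinates on $\cY\times\cN$ near $(y^0,\nu^0)$ of the form $(\sigma,\rho,\phi,\chi)$ with $\sigma\in\cO_n(0)$, $\rho\in\cO_d(0)$, $\phi\in\cO_\kappa(0)$, $\chi\in\cO_{m+s-n-d-\kappa}(0)$, arranged so that $H\equiv\omega+\sigma$, $\beta\equiv\beta^0+\rho$, and $\alpha_+\equiv\alpha^0_++\phi$ along the section $z=0$. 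Note there is no $\Psi$-parameter here, as no second equation ($P$) needs compensating. Next I would apply Moser's rescaling trick $z=\sqsq Z$ (but \emph{not} rescaling $y$, since $y$ is now a parameter, not an anti-action variable) to recast \eqref{eqfamiliar} into a Floquet-type family analogous to \eqref{eqtilde}, reversible with respect to $G:(x,Z)\mapsto(-x,KZ)$, with unperturbed linear part $Q\bigl(y,\nu(\cdots)\bigr)Z=\fT(d_1,d_2,d_3;\alpha^\vee,\beta^0+\rho)Z$. I would introduce the auxiliary parameter $\Phi\in\cO_{d_1+d_3-\kappa}(0)$ and set $\Lambda(\Phi)=\fT(d_1,d_2,d_3;\alpha^\star,\beta^0)$ with $\alpha^\star_+=\alpha^0_+$, $\alpha^\star_-=\alpha^0_-+\Phi$, exactly as in \eqref{eqPhi}. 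Since the block structure $N=m+2p$ and the matrices $\bL=\diag(I_m,K)$, $\bOmega=\diag(0_{m\times m},\Lambda(\Phi))$ satisfy Condition~$\Omega$ and the anti-commutation $\bOmega\bL=-\bL\bOmega$, Lemmas~\ref{lemcent} and~\ref{lemfR} apply verbatim, so Moser's Theorem~\ref{thmain} yields modifying terms $\lambda$, $M=\fT(d_1,d_2,d_3;q,r)$ and a $G$-commuting transformation reducing the modified system to the Floquet form \eqref{eqgoal} (with the $\dot\eta$ equation absent since there is no $Y$).

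Finally I would invoke the implicit function theorem a second time to absorb the modifying terms: I would solve $\sqsq\lambda=\sigma$, $\sqsq r=\rho$, $\sqsq q_+=\phi$, and $\alpha^0_-+\Phi+\sqsq q_-=\alpha_-\bigl(\nu(\cdots)\bigr)$ for $\sigma,\rho,\phi,\Phi$ as analytic functions of $\chi$ and $\sqvare$, just as in the context~2 proof. The crucial simplification over Theorem~\ref{thC2} is that the function $\mu$ (the modifying term in the $y$-equation) need not be compensated at all: because $b_0=0$ when $\delta=1$ by Lemma~\ref{lemfR}, the modifying vector field in $\fR_\nu^{-G}$ carries no $\partial_y$-shift, so no equation $\sqsq\mu=\psi$ arises and the $y$-direction is left free to parametrize a whole family of tori. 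The remaining free parameter $\chi$ of dimension $m+s-n-d-\kappa$ then labels the asserted family, each torus $\{y=\const,z=0,\nu=\const\}$ at $\vare=0$, with Floquet exponents given by \eqref{eqspec} with $\alpha'_+=\alpha^0_+$ fixed and $\alpha'_-$ varying. The main obstacle, as in the $\delta=-1$ case, is purely bookkeeping: verifying that the rescaled right-hand sides remain analytic at $\vare=0$ (not merely $\vare>0$) and that all the reversibility and evenness structure is preserved under the two coordinate changes; the deeper analytic work is entirely delegated to Theorem~\ref{thmain} and Lemma~\ref{lemfR}, so no genuinely new difficulty appears.
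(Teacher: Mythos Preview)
Your proposal correctly identifies the essential simplification---that by Lemma~\ref{lemfR} with $\delta=1$ the modifying vector field has $b_0=0$, so no $\mu$-equation needs compensating---and your dimension count is right. But the execution contains a genuine gap, and the proposal is internally inconsistent on the central point.

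You assert both that $N=m+2p$ with $\bL=\diag(I_m,K)$ (so the normal variable $X$ has an $m$-dimensional $Y$-component) \emph{and} that ``there is no $Y$'', that the involution is $G:(x,Z)\mapsto(-x,KZ)$, and that ``the $\dot\eta$ equation is absent''. These cannot all hold. The variable $y$ must play a \emph{dual} role: its base value $y^*\in\cY$ (together with $\nu$) is the external parameter on which you correctly place the submersion coordinates $(\sigma,\rho,\phi,\chi)$, but the \emph{deviation} $y-y^*$ remains a genuine phase space coordinate governed by $\dot y=g^\sharp+\vare g$. If you freeze $y=y^*$ and apply Theorem~\ref{thmain} to the reduced $(x,z)$-system with $N=2p$, the torus you obtain is \emph{not} invariant under the full flow: on $\{z\approx 0\}$ one has $g^\sharp=O_2(z)\approx 0$, but $\vare g(x,y^*,0,\nu,\vare)$ is merely odd in $x$ by reversibility, not identically zero, so $y$ drifts off $y^*$.

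The correct procedure---and this is what the paper's remark ``use $y$, $\nu$, and $\alpha_-$ to compensate'' intends---is to rescale the deviation as well: set $y=y^*+\sqsq Y$ and $z=\sqsq Z$, take $X=(Y,Z)$ with $N=m+2p$, and apply Theorem~\ref{thmain} exactly as in the proof of Theorem~\ref{thC2}. The $\dot\eta$ equation \emph{is} present in the reduced form~\eqref{eqgoal}; the simplification is only that the modifying term $\mu$ vanishes identically (since $b_0=0$ for $\delta=1$), not that the equation disappears. With this correction your remaining steps---solving $\sqsq\lambda=\sigma$, $\sqsq r=\rho$, $\sqsq q_+=\phi$, and matching $\Phi$ to $\alpha_-$, leaving $\chi$ of dimension $m+s-n-d-\kappa$ free---go through as you describe.
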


The proof of this theorem is analogous to that of Theorem~\ref{thC2}. The main
idea of the proof is to use $y$, $\nu$, and $\alpha_-$ to ``compensate'' for
the suitable modifying terms. Of course, by now there are known much deeper
results concerning invariant $n$-tori of systems similar to~\eqref{eqfamiliar}
\cite{B91,BCHV09,BHN07,BH95,BHS96Gro,BHS96LNM,L01,Sch87,S95Cha,S98,S06,%
S07DCDS,S07Stek,WX09,WXZ10,WZX11,W01,X04,Z08}. We have given the formulation
of Theorem~\ref{thC1} just to emphasize parallelism in applications of
Theorem~\ref{thmain} to the non-extreme reversible contexts~1 and~2.

\end{document}